\documentclass[11pt,a4paper]{amsart}
\usepackage[T1]{fontenc}
\usepackage[utf8]{inputenc}
\usepackage{lmodern,amsmath,amssymb,mathtools,microtype}
\usepackage[margin=29mm]{geometry}
\usepackage[hidelinks]{hyperref}
\hypersetup{
  pdftitle={Strongly small-2 sets which are not Riesz sets},
  pdfauthor={Przemyslaw Ohrysko},
  pdfsubject={Abstract harmonic analysis; Riesz and small-2 sets},
  pdfkeywords={Riesz set, small-2 set, Fourier-Stieltjes transform, measure algebra, singular measure, convolution}
}
\usepackage{enumitem}
\numberwithin{equation}{section}
\newtheorem{theorem}{Theorem}[section]
\newtheorem{lemma}[theorem]{Lemma}
\newtheorem{proposition}[theorem]{Proposition}
\newtheorem{corollary}[theorem]{Corollary}
\theoremstyle{definition}
\theoremstyle{remark}\newtheorem{remark}[theorem]{Remark}
\newcommand{\Z}{\mathbb Z}\newcommand{\N}{\mathbb N}\newcommand{\C}{\mathbb C}
\newcommand{\F}{\mathbb F}
\newcommand{\E}{\mathbb E}\newcommand{\supp}{\operatorname{supp}}
\newcommand{\Var}{\operatorname{Var}}\newcommand{\dd}{\,\mathrm d}
\newcommand{\ind}{\mathbf 1}\newcommand{\norm}[1]{\left\lVert#1\right\rVert}

\title[Strongly small-2 sets which are not Riesz sets]{Strongly small-2 sets which are not Riesz sets}
\author{Przemysław Ohrysko}
\address{University of Warsaw, Faculty of Mathematics, Informatics and Mechanics, Banacha 2, 02-097 Warsaw, Poland}
\email{P.Ohrysko@mimuw.edu.pl}
\date{September 10, 2026}
\subjclass[2020]{Primary 43A46; Secondary 43A10, 43A25}
\keywords{Riesz set, small-2 set, Fourier--Stieltjes transform, measure algebra, singular measure, infinite product, convolution}
\begin{document}
\begin{abstract}
We construct spectral sets in discrete abelian groups which are strongly
small-2 but are not Riesz sets. More precisely, on each of the compact groups
$(\Z/6\Z)^{\N}$ and $(\Z/p\Z)^{\N}$, where $p$ is an odd prime, there is a
proper spectral set $E$ such that $|\alpha|*|\beta|$ is absolutely continuous
for every $\alpha,\beta\in M_E(G)$, although $M_E(G)$ contains a nonzero
singular measure. The latter may be chosen with mass one, Fourier support
exactly $E$, and convolution square in $L^2(G)$. The construction combines
an elementary finite-intersection criterion with a summable perturbation
of a singular positive product measure. In the first model, the same full
spectral support admits an amplitude family with an exact $\ell^2$
absolute-continuity/singularity dichotomy. It also yields an uncountable
family of mutually singular measures and an isometric copy of
$\ell^1((0,1/2])$ in the square-zero quotient $M_E(G)/L^1_E(G)$.
\end{abstract}
\maketitle

\section{Introduction and main result}

Let $G$ be a compact abelian group, let $m$ be its normalized Haar measure,
and put $\Gamma=\widehat G$. We write $M(G)$ for the Banach algebra of
finite complex regular Borel measures, with convolution and the total
variation norm. We identify $L^1(G)$ with its closed ideal in $M(G)$.
For $E\subseteq\Gamma$, set
\[
 M_E(G)=\{\mu\in M(G):\widehat\mu(\gamma)=0\ (\gamma\notin E)\},
 \qquad L^1_E(G)=M_E(G)\cap L^1(G).
\]
A set $E$ is a \emph{Riesz set} if $M_E(G)=L^1_E(G)$. It is
\emph{small-2} if $\mu*\mu\in L^1(G)$ for every $\mu\in M_E(G)$.
By polarization, the latter condition is equivalent to
\begin{equation}\label{eq:small2}
 M_E(G)*M_E(G)\subseteq L^1_E(G).
\end{equation}
Indeed polarization gives $\alpha*\beta\in L^1(G)$ for
$\alpha,\beta\in M_E(G)$, while
$\widehat{\alpha*\beta}=\widehat\alpha\,\widehat\beta$ shows that its
Fourier support is contained in $E$.
We call $E$ \emph{strongly small-2} if
\begin{equation}\label{eq:strong2}
 |\alpha|*|\beta|\ll m\qquad(\alpha,\beta\in M_E(G)).
\end{equation}
This terminology will always refer to \eqref{eq:strong2}; it does not
assert that the variations themselves have Fourier support in $E$.
Since $|\alpha*\beta|\le |\alpha|*|\beta|$, the strong condition implies
\eqref{eq:small2}.

The prototype for the notion of a Riesz set is the classical theorem of
F.~and M.~Riesz on analytic measures on the circle. In the setting of
abstract compact abelian groups, the terminology was introduced by Meyer
\cite{Meyer}. Since then Riesz sets have been studied from several points of
view, including Banach-space geometry and the structure of ideals in group
algebras; see, for instance, \cite{Ulger,EFG} and the references therein.

The small-$p$ side of the subject has a related but distinct origin. Wiener
and Wintner \cite{WienerWintner} exhibited singular measures whose
convolution square is absolutely continuous. Glicksberg \cite{Glicksberg}
subsequently obtained spectral thinness conditions which force absolute
continuity uniformly for measures with Fourier--Stieltjes transform supported
in a prescribed set. The terminology of small-$p$ sets appears explicitly in
the work of Dressler, Parker and Pigno \cite{DresslerParkerPigno}. Further
criteria and permanence questions were developed by Graham \cite{Graham},
Pigno \cite{Pigno}, Yamaguchi \cite{Yamaguchi}, and MacLean \cite{MacLean}.
For the present paper, the relevant classical criterion, recalled by MacLean
\cite[p.~116]{MacLean}, is the finiteness of
$E\cap(\gamma-E)$ for every $\gamma$.

A natural question running through this circle of ideas is whether the
uniform smoothing property defining a small-2 set already forces the Riesz
property. This question also became intertwined with Arens regularity of the
spectral ideals $L^1_E(G)$; see \cite{Ulger,EFG}. Very recently Lau and
\"Ulger \cite{LU} announced an affirmative answer for compact metrizable
abelian groups, asserting in particular that every small-2 set is a Riesz
set. The examples constructed here give the opposite conclusion in exactly
this class of groups, and hence provide counterexamples to that announced
implication. For this reason we give the finite-intersection argument needed
below in a complete form and keep the construction self-contained. The
finite-intersection criterion and the product-perturbation device are used as
auxiliary principles; no novelty claim is made for those general mechanisms
as such.

\begin{theorem}\label{thm:main}
Let $G$ be either $(\Z/6\Z)^{\N}$ or $(\Z/p\Z)^{\N}$ for an odd prime
$p$. There are a proper set $E\subsetneq\widehat G$ and a measure
$\mu\in M(G)$ such that
\begin{equation}\label{eq:main}
 \begin{gathered}
 |\alpha|*|\beta|\ll m\quad(\alpha,\beta\in M_E(G)),\\
 \mu\perp m,\qquad \mu(G)=1,\qquad
 \supp\widehat\mu=E,\qquad \mu*\mu\in L^2(G).
 \end{gathered}
\end{equation}
In particular, $E$ is strongly small-2 and is not a Riesz set.
Moreover, $E\cap(-E)=\{0\}$.
\end{theorem}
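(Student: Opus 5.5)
The plan is to split the proof into a combinatorial half, producing $E$ and strong small-2, and an analytic half, producing $\mu$. Throughout, $G$ is written as $\prod_n \Z/q\Z$ with $q=6$ or $q=p$, so $\widehat G=\bigoplus_n \Z/q\Z$ consists of finitely supported sequences $\chi=(\chi_n)$.

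\emph{Step 1: the set $E$.} I would fix a set $A\subset \Z/q\Z\setminus\{0\}$ with $A\cap(-A)=\emptyset$. For $p$ odd, $A$ can be any set containing exactly one element of each pair $\{a,-a\}$; for $q=6$ there is extra room because of the factor $\Z/2\times\Z/3$. Put
\[
E=\{\chi\in\widehat G:\ \chi_n\in A\cup\{0\}\ \text{for all } n\}.
\]
Then $E\cap(-E)=\{0\}$ is immediate. Fix $\gamma$ and take $\chi\in E\cap(\gamma-E)$. At a coordinate with $\gamma_n=0$, both $\chi_n$ and $-\chi_n$ lie in $A\cup\{0\}$, which forces $\chi_n=0$. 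Hence $\supp\chi\subseteq\supp\gamma$, so $E\cap(\gamma-E)$ is finite for every $\gamma$. Since $A$ omits some nonzero residue, $E$ is proper.

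\emph{Step 2: strong small-2.} I would invoke the finite-intersection criterion recalled from \cite[p.~116]{MacLean}, in the strong form the paper proves: finiteness of every $E\cap(\gamma-E)$ gives $|\alpha|*|\beta|\ll m$. The mechanism I have in mind approximates the polar densities $\bar h_\alpha$ and $\bar h_\beta$ in $L^1(|\alpha|)$ and $L^1(|\beta|)$ by trigonometric polynomials, expressing $|\alpha|*|\beta|$ as a norm limit of finite combinations of twisted convolutions $(\gamma\alpha)*(\eta\beta)$. The finiteness of $E\cap(\gamma-E)$ would then be used to show that each such convolution is a trigonometric polynomial. The limit then lies in the closed ideal $L^1(G)$. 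I have not checked that this mechanism is correct as stated. With unreflected $\alpha,\beta$, the Fourier support of $(\gamma\alpha)*(\eta\beta)$ is $(\gamma+E)\cap(\eta+E)$, which is infinite when $\gamma=\eta$. The reflected convolutions $(\gamma\alpha)*(\eta\tilde\beta)$, by contrast, have support $\gamma+\bigl(E\cap(\eta-\gamma-E)\bigr)$, which is finite. So the real work in this step is confirming that the paper's stated criterion actually delivers $|\alpha|*|\beta|\ll m$ rather than the reflected statement.

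\emph{Step 3: the measure $\mu$ (main obstacle).} I would take $\mu$ as an infinite product $\mu=\bigotimes_n f_n\,dm_n$. Each $f_n$ is a function on $\Z/q\Z$ with $\widehat f_n(0)=1$ and $\supp\widehat f_n=A\cup\{0\}$, all these coefficients nonzero. This gives $\widehat\mu(\chi)=\prod_n\widehat f_n(\chi_n)$, so $\supp\widehat\mu=E$ and $\mu(G)=1$. Moreover $\mu*\mu\in L^2(G)$ holds exactly when $\prod_n\bigl(1+\sum_{a\in A}|\widehat f_n(a)|^4\bigr)<\infty$. Singularity holds by Kakutani's theorem applied to $|\mu|=\bigotimes|f_n|$ once $\sum_n\bigl(1-\int|f_n|^{1/2}\dd m_n\bigr)=\infty$.

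The hard part is that $\mu$ must be a finite measure, which requires $\prod_n\|f_n\|_1<\infty$. Since the spectrum of $f_n$ is not symmetric, $f_n$ cannot be nonnegative, so $\|f_n\|_1>1$. Naively, both $\|f_n\|_1-1$ and the Hellinger defect are of second order in the coefficients, so the two requirements pull against each other. I would resolve this with the announced perturbation device. Start from a positive product density $g_n\ge0$ with $\int g_n=1$, so that $\|g_n\|_1=1$, which is far from $1$ in Hellinger distance and is built to make $\bigotimes g_n$ singular. Then set $f_n=g_n+r_n$. The correction $r_n$ kills the coefficients of $g_n$ on $-A$ and keeps those on $A$, and it is arranged so that $\sum_n\|r_n\|_1<\infty$. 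With that, $\prod\|f_n\|_1$ converges, $\mu$ stays singular because it differs from the singular positive product only by a controlled factor, and the fourth-power sum stays finite.

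For $(\Z/6\Z)^{\N}$, I would try to place the positive, symmetric part on the $\Z/2$ factor, where $a=-a$, and the asymmetric part on $\Z/3$. I expect that tuning the amplitudes, say of size $n^{-1/2}$, so that the $\ell^2$ sum diverges while the $\ell^4$ sum and the correction sum converge will be the delicate computation.
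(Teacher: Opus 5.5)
Your Steps 1 and 2 are sound in substance. Step 3 has a genuine gap, and no tuning repairs it: a product of single-coordinate factors on the fixed group $\Z/q\Z$ is never both a finite measure and singular.

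\emph{Why the direct product fails.} Let $\widehat f(0)=1$, $\supp\widehat f\subseteq A\cup\{0\}$, and $\psi=f-1$. Since $A\cap(-A)=\emptyset$, we have $\E\psi^2=0$, hence $\E(\operatorname{Im}\psi)^2=\tfrac12\norm{\psi}_2^2$. For $|\psi|\le\tfrac12$ one has $|1+\psi|\ge 1+\operatorname{Re}\psi+\tfrac13(\operatorname{Im}\psi)^2$. Also $\norm{\psi}_\infty\le\sqrt q\,\norm{\psi}_2$. Together with compactness, this gives $\norm{f}_1-1\ge c_q\min(\norm{f-1}_2^2,1)$. So $\prod_n\norm{f_n}_1<\infty$ forces $\sum_n\norm{f_n-1}_2^2<\infty$. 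Then $\sum_\chi|\widehat\mu(\chi)|^2=\prod_n\norm{f_n}_2^2<\infty$, i.e.\ $\mu\in L^2(G)$.

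\emph{Why the perturbation version fails.} The correction $r_n$ must cancel $\widehat{g_n}$ at every frequency outside $A\cup\{0\}$. Since $g_n$ is real, each value $|\widehat{g_n}(\xi)|$ with $\xi\ne0$ occurs at such a frequency ($\xi$ or $-\xi$). Hence $\norm{r_n}_1\ge\max_{\xi\ne0}|\widehat{g_n}(\xi)|\ge\norm{g_n-1}_2/\sqrt{q-1}$. So $\sum_n\norm{r_n}_1<\infty$ implies $\sum_n\norm{g_n-1}_2^2<\infty$, and then $\bigotimes_n g_nm_n$ is itself in $L^2(G)$, not singular. With amplitudes $n^{-1/2}$ the correction sum is simply $\sum n^{-1/2}=\infty$. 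There is no extra room in $\Z/6\Z$ either: the element $3$ is self-inverse, so $A$ has at most two elements, both with nonzero $\Z/3\Z$-part.

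\emph{The missing idea.} You need blocks $H_j$ with $|H_j|\to\infty$, where the $L^1$ and $L^2$ norms decouple. The paper uses $H_j=(\Z/3\Z)\times\F_2^j\times\F_2^j$ and the bent function $b(u,v)=(-1)^{u\cdot v}$, which has flat spectrum $\widehat b=q^{-1}b$ with $q=2^j$. For odd $p$ it uses $(\Z/p\Z)\times\F_{p^{2j}}$ and the normalized quadratic character, via a Gauss sum. Then $g=a\,b(w)\cos(2\pi t/3)$ has $\E g^2=a^2/2$ in every block, so $\bigotimes_j(1+g_j)m_j\perp m$ by a separating statistic. Yet each nonconstant coefficient has size only $a/q$. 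One member of every antipodal pair is cancelled by $h=aq\ind_{\{0\}}(w)\sin(2\pi t/3)$. This lives on a $w$-set of measure $q^{-2}$ and costs only $\norm h_1=a/(\sqrt3q)$, which is summable in $j$. Consequently $E$ is a rectangle over these blocks, not over the $\Z/6\Z$ or $\Z/p\Z$ coordinates: in $(b(z),z)$ the sign of the $\Z/3\Z$-frequency depends on the whole binary block.

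\emph{Your Step 2 worry.} It is resolved by conjugation without reflection. The measure $\overline\beta$ has $|\overline\beta|=|\beta|$ and Fourier support $-E$. Approximating $|\beta|$ by $Q_n\overline\beta$ produces twisted convolutions $(\chi_u\alpha)*(\chi_v\overline\beta)$ with spectrum in $(u+E)\cap(v-E)$, which is finite.
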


The assertion concerns \emph{all} measures with the prescribed spectral
support, not merely one measure whose square happens to be absolutely
continuous. This distinction is provided by a combinatorial property of
$E$. In contrast, singularity is supplied by a positive product measure.
The main construction changes its factors by a summable amount in
$L^1$. These small changes remove every nonzero antipodal pair from the
local Fourier support without destroying singularity.

A second ingredient is probabilistic. We regard the positive product
measures used below as laws of independent coordinate variables. Their
singularity with respect to Haar measure is detected by \emph{separating
statistics}: bounded coordinate observables whose normalized sums have
different almost-everywhere limits under the two product measures.
Independence is used only to control the variance; Chebyshev's inequality,
followed along a suitably sparse subsequence by the first Borel--Cantelli
lemma, converts this variance estimate into almost-everywhere convergence.
The same mechanism reappears in Section~5 in a weighted form. There the
natural scale is
\[
 V_N=\sum_{j\le N}a_j^2,
\]
and the statistic $\sum_{j\le N}a_ju_j$, normalized by $V_N$, separates
the singular branch from Haar measure precisely when $V_N\to\infty$.
This gives a probabilistic interpretation of the sharp $\ell^2$ threshold
in the amplitude family. A related separating statistic also proves the
pairwise singularity of the measures corresponding to distinct constant
amplitudes. No probabilistic limit theorem beyond elementary variance
estimates, Chebyshev's inequality, and Borel--Cantelli is needed.

Sections~2 and 3 contain the general mechanism. Section~4 proves the
first instance of Theorem~\ref{thm:main}, using an explicitly computed
Boolean quadratic function. Section~5 studies the amplitude family on
that fixed spectral set. Section~6 gives the construction for every odd
prime, including the required Gauss-sum calculation. Section~7 records
some elementary consequences and limitations. No theorem from
\cite{LU} is used in any proof.

\subsection*{Conventions}
We use additive notation for $\Gamma$ and write $\chi_\gamma$ for the
character corresponding to $\gamma$. Our Fourier convention is
\[
 \widehat\mu(\gamma)=\int_G\overline{\chi_\gamma(x)}\dd\mu(x).
\]
On a finite group all averages and Fourier transforms are normalized.
For a complex measure, $\overline\mu(B)=\overline{\mu(B)}$ denotes
\emph{complex conjugation without reflection}. Thus
\[
 \widehat{\overline\mu}(\gamma)
   =\overline{\widehat\mu(-\gamma)}.
\]
This is distinct from the usual involution
$\mu^\star(B)=\overline{\mu(-B)}$ in the measure algebra.
The notation $\mu\perp m$ means $|\mu|\perp m$.

\section{A finite-intersection criterion}

\begin{lemma}\label{lem:finitefourier}
If a measure on a compact abelian group has finite Fourier support,
then it has a trigonometric polynomial as its density with respect
to Haar measure.
\end{lemma}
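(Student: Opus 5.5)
The plan is to compare $\mu$ with the explicit trigonometric polynomial built from its Fourier coefficients, and then conclude by uniqueness of Fourier--Stieltjes transforms. Let $\mu\in M(G)$ and suppose $F=\{\gamma\in\Gamma:\widehat\mu(\gamma)\neq0\}$ is finite. Define
\[
 f=\sum_{\gamma\in F}\widehat\mu(\gamma)\,\chi_\gamma ,
\]
a trigonometric polynomial. It is continuous, hence lies in $L^1(G)$, and we may regard $\nu=f\,m$ as an element of $M(G)$.

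The first step is to compute $\widehat\nu$. With our convention,
\[
 \widehat\nu(\gamma')=\int_G f\,\overline{\chi_{\gamma'}}\dd m=\sum_{\gamma\in F}\widehat\mu(\gamma)\int_G\chi_\gamma\overline{\chi_{\gamma'}}\dd m .
\]
The orthogonality relations for characters on the compact group $G$ (Haar measure normalized) give $\int_G\chi_\gamma\overline{\chi_{\gamma'}}\dd m=\delta_{\gamma\gamma'}$. Hence $\widehat\nu(\gamma')=\widehat\mu(\gamma')$ for $\gamma'\in F$, and $\widehat\nu(\gamma')=0=\widehat\mu(\gamma')$ for $\gamma'\notin F$. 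So $\widehat\nu=\widehat\mu$ on all of $\Gamma$.

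The second step invokes the uniqueness theorem for Fourier--Stieltjes transforms: a measure in $M(G)$ whose transform vanishes identically is zero. This holds because trigonometric polynomials are uniformly dense in $C(G)$ by Stone--Weierstrass (the characters separate points by Pontryagin duality), and $M(G)=C(G)^*$ by the Riesz representation theorem. Applying this to $\mu-\nu$ gives $\mu=f\,m$, which is the claim. The density is a polynomial in the characters $\chi_\gamma$ with $\gamma$ in the finite Fourier support of $\mu$.

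Nothing here is a serious obstacle. The only nontrivial input is the uniqueness theorem, and I would cite it as standard rather than reprove it. For the groups actually used in the paper, $(\Z/6\Z)^{\N}$ and $(\Z/p\Z)^{\N}$, it is even more elementary: the characters depend on finitely many coordinates, so their span consists of the cylinder functions, which are obviously dense in $C(G)$.
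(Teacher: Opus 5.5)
Your proposal is correct and follows the paper's proof: both build the trigonometric polynomial $\sum_{\gamma\in F}\widehat\mu(\gamma)\chi_\gamma$, check via orthogonality that its coefficients agree with those of $\mu$, and conclude by Fourier uniqueness from the Stone--Weierstrass density of trigonometric polynomials in $C(G)$. You spell out the orthogonality computation and the duality $M(G)=C(G)^*$ explicitly, while the paper leaves them implicit.
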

\begin{proof}
If the support is $F$, the polynomial
$P=\sum_{\gamma\in F}\widehat\mu(\gamma)\chi_\gamma$ has the same
Fourier coefficients as $\mu$. Trigonometric polynomials are uniformly
dense in $C(G)$, by the Stone--Weierstrass theorem. Hence equality of
all Fourier coefficients implies equality of the measures.
\end{proof}

\begin{proposition}[Mixed finite-intersection criterion]\label{prop:criterion}
Let $E_1,E_2\subseteq\widehat G$ satisfy
\begin{equation}\label{eq:finiteint}
 \#\bigl(E_1\cap(\gamma-E_2)\bigr)<\infty
       \qquad(\gamma\in\widehat G).
\end{equation}
Then
\[
 |\alpha|*|\beta|\ll m
 \qquad(\alpha\in M_{E_1}(G),\ \beta\in M_{E_2}(G)).
\]
In particular, \eqref{eq:finiteint} with $E_1=E_2=E$ implies that
$E$ is strongly small-2. No metrizability assumption is needed.
\end{proposition}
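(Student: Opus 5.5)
The plan is to lift the problem to $G\times G$, where the hypothesis \eqref{eq:finiteint} becomes a finiteness statement for Fourier supports along fibres. This lets Lemma~\ref{lem:finitefourier} be applied one character at a time. The point is that $|\alpha|*|\beta|$ is not itself controlled by Fourier data in $E_1$ or $E_2$, but a suitable measure on the product group is, and its variation recovers $|\alpha|*|\beta|$ as a marginal.

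First, I define the homeomorphism $\psi(s,t)=(s+t,-t)$ of $G\times G$ and put $\rho=\psi_*(\alpha\times\beta)$. Since $\psi$ is a Borel isomorphism, $|\rho|=\psi_*(|\alpha|\times|\beta|)$. Projecting onto the first coordinate, $|\alpha|*|\beta|$ is the first marginal of $|\rho|$, so
\[
(|\alpha|*|\beta|)(B)=|\rho|(B\times G)
\]
for Borel sets $B\subseteq G$. Next I compute, for $(\gamma,\eta)\in\Gamma\times\Gamma$,
\[
\widehat\rho(\gamma,\eta)=\int\!\!\int\overline{\chi_\gamma(s+t)}\,\chi_\eta(t)\dd\alpha(s)\dd\beta(t)=\widehat\alpha(\gamma)\,\widehat\beta(\gamma-\eta).
\]
This can be nonzero only when $\gamma\in E_1\cap(\eta-E_2)$, and by \eqref{eq:finiteint} that set is finite for every fixed $\eta$.

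Then, for each $\eta$, I consider the measure on $G$
\[
\rho_\eta(A)=\int_{A\times G}\overline{\chi_{-\eta}(v)}\dd\rho(u,v).
\]
Its Fourier transform is $\gamma\mapsto\widehat\rho(\gamma,\eta)$, which has finite support. By Lemma~\ref{lem:finitefourier}, $\rho_\eta\ll m$. Now fix a Haar-null Borel set $B$ and a Borel set $A\subseteq B$. Define the measure $\tau_A(C)=\rho(A\times C)$ on $G$. Its Fourier coefficients are, up to the sign convention, the numbers $\rho_\eta(A)$, and these all vanish because $m(A)=0$. Hence $\tau_A=0$, that is, $\rho(A\times C)=0$ for all Borel $A\subseteq B$ and all Borel $C$.

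Measurable rectangles generate the Borel $\sigma$-algebra of $B\times G$; this uses only regularity and the uniqueness of measures agreeing on a generating $\pi$-system, not metrizability. It follows that the restriction $\rho|_{B\times G}$ vanishes, so $|\rho|(B\times G)=|\rho|_{B\times G}|(G\times G)=0$. Therefore $(|\alpha|*|\beta|)(B)=0$, which proves $|\alpha|*|\beta|\ll m$. The case $E_1=E_2=E$ then gives strong small-2.

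The main obstacle is the passage from the complex measure $\rho$ to its variation. One cannot apply Fourier arguments to $|\rho|$ directly, because its Fourier support is uncontrolled. The device that should make this work is to show that the restriction of $\rho$ to $B\times G$ is the zero measure. Its variation is then automatically zero, and that variation equals $|\rho|$ restricted to $B\times G$. The only care needed is to check the sign conventions in $\rho_\eta$ against the indexing in \eqref{eq:finiteint}, and to verify that "all coefficients of $\tau_A$ vanish" really follows from "$\rho_\eta(A)=0$ for all $\eta$". The latter is immediate from Fubini applied to bounded integrands.
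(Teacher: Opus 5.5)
\textbf{The gap.} Your support claim for $\widehat\rho(\cdot,\eta)$ is wrong, and the argument breaks on it. Your formula $\widehat\rho(\gamma,\eta)=\widehat\alpha(\gamma)\widehat\beta(\gamma-\eta)$ is correct. But it is nonzero only if $\gamma-\eta\in E_2$, i.e.\ $\gamma\in E_1\cap(\eta+E_2)$, not $E_1\cap(\eta-E_2)$, and \eqref{eq:finiteint} says nothing about the former set.

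The failure is already visible at $\eta=0$. There the fibre is $\rho_0(A)=\rho(A\times G)=(\alpha*\beta)(A)$, so $\rho_0=\alpha*\beta$, with Fourier support $E_1\cap E_2$. For $E_1=E_2=E$ and $\alpha=\beta=\mu$ as in Theorem~\ref{thm:main}, this is all of $E$, which is infinite. So Lemma~\ref{lem:finitefourier} does not apply. Worse, the assertion $\rho_0\ll m$ is exactly the small-2 property you are trying to deduce, so the argument is circular at this fibre.

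This is not a sign-convention issue. As long as $\rho$ is built from $\alpha\times\beta$ with first coordinate $s+t$, the zero-frequency fibre is $\alpha*\beta$, whatever second coordinate you pick. (The other slip, $\widehat{\rho_\eta}(\gamma)=\widehat\rho(\gamma,-\eta)$ rather than $\widehat\rho(\gamma,\eta)$, is harmless.)

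\textbf{The missing idea and a second repair.} You need to conjugate the second factor, which is the device in the paper's proof. Take $\rho=\psi_*(\alpha\times\overline\beta)$, where $\overline\beta$ is conjugation without reflection. Since $|\overline\beta|=|\beta|$, the first marginal of $|\rho|$ is still $|\alpha|*|\beta|$. Now $\widehat\rho(\gamma,\eta)=\widehat\alpha(\gamma)\,\overline{\widehat\beta(\eta-\gamma)}$ is supported in exactly the set $E_1\cap(\eta-E_2)$ you wrote. Moreover $\rho_\eta=\alpha*(\chi_{-\eta}\overline\beta)$ is one of the modulated convolutions in \eqref{eq:modulation}.

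With this change your fibre argument is a valid alternative to the paper's. The paper approximates the polar densities by trigonometric polynomials and uses norm-closedness of $L^1(G)$. You instead kill $\rho$ on $B\times G$ by Fourier uniqueness in the second variable.

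A second repair is needed for the non-metrizable case the statement explicitly covers. Borel rectangles need not generate the Borel $\sigma$-algebra of $G\times G$. For instance, for $G=(\Z/2\Z)^\kappa$ with $\kappa>2^{\aleph_0}$, the diagonal is closed but not in $\mathcal B(G)\otimes\mathcal B(G)$. Instead, show that $\int(\ind_B f)\otimes g\dd\rho=0$ for all $f,g\in C(G)$, using simple-function approximation of $\ind_Bf$ and $g$ together with $\rho(A\times C)=0$ for $A\subseteq B$. Then Stone--Weierstrass on $C(G\times G)$ and uniqueness in the Riesz representation theorem give $\rho|_{B\times G}=0$.
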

\begin{proof}
For $u,v,\xi\in\widehat G$,
\begin{equation}\label{eq:modulation}
 \widehat{(\chi_u\alpha)*(\chi_v\overline\beta)}(\xi)
 =\widehat\alpha(\xi-u)\,
    \overline{\widehat\beta(v-\xi)}.
\end{equation}
Its support is contained in
\[
 (u+E_1)\cap(v-E_2)
 =u+\bigl(E_1\cap((v-u)-E_2)\bigr),
\]
which is finite. Lemma~\ref{lem:finitefourier} therefore gives a
trigonometric polynomial density for every convolution in
\eqref{eq:modulation}.

Write $\alpha=\phi|\alpha|$ and $\beta=\psi|\beta|$, where the polar
functions have modulus one almost everywhere for their respective
variations. For any finite regular Borel measure on $G$, continuous
functions are dense in $L^1$, and trigonometric polynomials are uniformly
dense in continuous functions. We can consequently choose polynomials
$P_n,Q_n$ such that
\[
 P_n\longrightarrow\overline\phi\text{ in }L^1(|\alpha|),
 \qquad Q_n\longrightarrow\psi\text{ in }L^1(|\beta|).
\]
It follows that
\[
 P_n\alpha\longrightarrow|\alpha|,
 \qquad Q_n\overline\beta\longrightarrow|\beta|
 \quad\text{in total variation norm}.
\]
Each $(P_n\alpha)*(Q_n\overline\beta)$ is a finite linear combination
of the convolutions in \eqref{eq:modulation}, hence belongs to $L^1(G)$.
Moreover,
\begin{align*}
 &\norm{(P_n\alpha)*(Q_n\overline\beta)-|\alpha|*|\beta|}\\
 &\quad\le
 \norm{P_n\alpha-|\alpha|}\,\norm{Q_n\overline\beta}
 +\norm{\alpha}\,\norm{Q_n\overline\beta-|\beta|},
\end{align*}
so the convolution inequality in $M(G)$ shows norm convergence to
$|\alpha|*|\beta|$. The subspace $L^1(G)$ is
norm closed in $M(G)$, which proves the claim.
\end{proof}

\begin{corollary}[Asymmetric rectangles]\label{cor:rectangle}
Let $H_j$ be finite abelian groups. Suppose
$S_j\subseteq\widehat H_j$, $0\in S_j$, and
\begin{equation}\label{eq:localasym}
 S_j\cap(-S_j)=\{0\}\qquad(j\ge1).
\end{equation}
For
\begin{equation}\label{eq:rectangle}
 G=\prod_{j\ge1}H_j,\qquad
 E=\left\{\gamma\in\bigoplus_{j\ge1}\widehat H_j:
                  \gamma_j\in S_j\text{ for all }j\right\},
\end{equation}
the set $E$ is strongly small-2 and $E\cap(-E)=\{0\}$.
\end{corollary}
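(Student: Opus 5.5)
The plan is to verify the hypothesis \eqref{eq:finiteint} of Proposition~\ref{prop:criterion} with $E_1=E_2=E$. Then we check $E\cap(-E)=\{0\}$ directly, coordinate by coordinate.

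First, identify $\widehat G$ with $\bigoplus_j\widehat H_j$. An element $\gamma$ of the direct sum has finite support $\operatorname{supp}\gamma=\{j:\gamma_j\neq0\}$. Each $\eta\in E$ also has finite support, because $E$ is a subset of the direct sum.

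Fix $\gamma\in\widehat G$. Suppose $\eta\in E\cap(\gamma-E)$, so that $\eta\in E$ and $\gamma-\eta\in E$. Take any coordinate $j\notin\operatorname{supp}\gamma$. There $\eta_j\in S_j$ and $-\eta_j=\gamma_j-\eta_j\in S_j$. Hence $\eta_j\in S_j\cap(-S_j)=\{0\}$ by \eqref{eq:localasym}. It follows that $\operatorname{supp}\eta\subseteq\operatorname{supp}\gamma$, and this is a fixed finite set $F$.

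For $j\in F$, the coordinate $\eta_j$ ranges over $S_j\subseteq\widehat H_j$, which is finite. So $\eta$ ranges over at most $\prod_{j\in F}\#S_j<\infty$ elements. This proves \eqref{eq:finiteint}. Proposition~\ref{prop:criterion} then yields strong small-2.

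For $E\cap(-E)=\{0\}$, take $\eta$ with both $\eta$ and $-\eta$ in $E$. Then every coordinate satisfies $\eta_j\in S_j\cap(-S_j)=\{0\}$, so $\eta=0$. Conversely, $0\in E$ because $0\in S_j$ for every $j$.

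There is no real obstacle here. The only point requiring care is that the direct-sum (finite support) structure makes the complement of $\operatorname{supp}\gamma$ cofinite. This is what forces every element of the intersection into a finite box.
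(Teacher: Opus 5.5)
Your proof is correct and follows the paper's argument: off the finite support of $\gamma$, every $\xi\in E\cap(\gamma-E)$ has both $\xi_j$ and $-\xi_j$ in $S_j$, hence $\xi_j=0$. The intersection therefore lies in a finite box, so Proposition~\ref{prop:criterion} applies, and $E\cap(-E)=\{0\}$ follows coordinatewise (you also note explicitly that $0\in E$, which the paper leaves implicit).
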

\begin{proof}
Fix $\gamma\in\widehat G$ and let $J=\{j:\gamma_j\ne0\}$, a finite
set. If $\xi\in E\cap(\gamma-E)$ and $j\notin J$, then both
$\xi_j$ and $-\xi_j$ belong to $S_j$. Thus $\xi_j=0$. The remaining
coordinates lie in finitely many finite sets, so
$E\cap(\gamma-E)$ is finite. Apply Proposition~\ref{prop:criterion}.
The last assertion follows directly from \eqref{eq:localasym}.
\end{proof}

\section{Summable perturbations of product measures}

The following norm argument is useful because singularity is not
preserved by arbitrary weak-star limits.

\begin{lemma}[Product perturbation]\label{lem:product}
Let $H_j$ be finite abelian groups with normalized Haar measures $m_j$.
Let $p_j,f_j:H_j\to\C$ satisfy
\begin{equation}\label{eq:producthyp}
 p_j>0,\quad \int p_j\dd m_j=\int f_j\dd m_j=1,
 \quad\delta_j:=\norm{f_j-p_j}_1,
 \quad\sum_j\delta_j<\infty.
\end{equation}
On $G=\prod_jH_j$ put $\rho=\bigotimes_j p_jm_j$.
There exists a unique measure $\mu$ whose marginal on the first $N$
blocks has density $\prod_{j=1}^N f_j$. It satisfies
\begin{equation}\label{eq:productnorm}
 \mu(G)=1,\qquad \mu\ll\rho,\qquad
 \norm\mu=\prod_{j=1}^{\infty}\norm{f_j}_1<\infty.
\end{equation}
If every $f_j$ is everywhere nonzero, then $|\mu|\sim\rho$.
Writing $C=\prod_j\norm{f_j}_1$ and
\[
 \nu_N=\bigotimes_{j\le N}f_jm_j\ \otimes\!\bigotimes_{j>N}p_jm_j,
\]
we have the quantitative estimate
\begin{equation}\label{eq:tvbound}
 \norm{\mu-\nu_N}\le C\sum_{j>N}\delta_j.
\end{equation}
\end{lemma}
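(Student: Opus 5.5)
The plan is to construct $\mu$ as the total-variation limit of the measures $\nu_N$, via a telescoping estimate. Let $\nu_N$ be as in the statement; each $\nu_N$ is a product of finitely many signed factors and a tail of probability factors. First I will bound the norm of $\nu_N$. For a product measure, $\norm{\bigotimes_j \lambda_j}=\prod_j\norm{\lambda_j}$, since the variation of a product is the product of the variations; on finitely many blocks this is an elementary computation with densities, and the tail is a product of probability measures. Hence $\norm{\nu_N}=\prod_{j\le N}\norm{f_j}_1\le C$. Here $C<\infty$ because $\norm{f_j}_1\le 1+\delta_j$ and $\sum\delta_j<\infty$. Moreover $C>0$, since $\norm{f_j}_1\ge|\int f_j\dd m_j|=1$.

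Next I will compare consecutive terms. The measures $\nu_{N}$ and $\nu_{N-1}$ differ only in block $N$, so
\[
\nu_N-\nu_{N-1}=\bigotimes_{j<N}f_jm_j\otimes (f_N-p_N)m_N\otimes\bigotimes_{j>N}p_jm_j,
\]
and therefore $\norm{\nu_N-\nu_{N-1}}\le C\delta_N$. Summing these differences shows that $(\nu_N)$ is Cauchy in $M(G)$. Let $\mu$ be its limit. Then $\norm{\mu-\nu_N}\le\sum_{j>N}C\delta_j$, which is \eqref{eq:tvbound}.

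The remaining properties follow from this limit.
\begin{itemize}
\item \emph{Marginals.} The marginal of $\nu_M$ on the first $N$ blocks has density $\prod_{j\le N}f_j$ for every $M\ge N$, because the tail factors integrate to $1$. Taking marginals is norm-continuous, so $\mu$ has the same marginal.
\item \emph{Uniqueness.} Measures with prescribed finite-dimensional marginals agree on cylinder sets. Cylinder functions are dense in $C(G)$, so two such measures are equal.
\item \emph{Mass.} We get $\mu(G)=\lim\nu_N(G)=1$.
\item \emph{Absolute continuity.} Each $\nu_N\ll\rho$, because $\nu_N$ has density $\prod_{j\le N}(f_j/p_j)$ with respect to $\rho$, using $p_j>0$. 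The set $L^1(\rho)$ is norm closed in $M(G)$, so $\mu\ll\rho$.
\item \emph{Norm.} Norm convergence gives $\norm\mu=\lim\norm{\nu_N}=C$.
\end{itemize}

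It remains to prove $|\mu|\sim\rho$ when no $f_j$ vanishes; I expect this to be the main obstacle. Write $\mu=h\rho$. The density $h$ is the $L^1(\rho)$-limit of the martingale $g_N=\prod_{j\le N}f_j/p_j$, viewed as functions of $x$ through its first $N$ coordinates. We need $h\ne0$ $\rho$-a.e.

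I would argue with the variations, not with $h$ directly. Let $\tau_N=\bigotimes_{j\le N}|f_j|m_j\otimes\bigotimes_{j>N}p_jm_j$. The same telescoping argument, with $\bigl||f_j|-p_j\bigr|\le|f_j-p_j|$, shows that $\tau_N$ converges in norm to a positive measure $\tau$. Now $|\nu_N|$ equals $\tau_N$, and $|\cdot|$ is norm-continuous (indeed $\norm{|\lambda|-|\lambda'|}\le\norm{\lambda-\lambda'}$), so $|\mu|=\tau$.

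So it suffices to show that a limit of positive product measures with everywhere-positive factors satisfies $\tau\sim\rho$. This is a Kakutani-type statement. I would use Kakutani's dichotomy with the Hellinger affinities $a_j=\int\sqrt{(|f_j|/\norm{f_j}_1)\,p_j}\dd m_j$. The normalised factors are within $L^1$-distance $O(\delta_j)$ of $p_j$, so $1-a_j\le\tfrac12\norm{\,|f_j|/\norm{f_j}_1-p_j}_1=O(\delta_j)$. Hence $\prod_j a_j>0$. Since all factors are everywhere positive, the factor measures are mutually equivalent, and Kakutani's theorem gives equivalence of the infinite products. Alternatively, if I want to keep the argument elementary, I can note that $\log g_N$ converges a.e. by a summable second-moment bound.
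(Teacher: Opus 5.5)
Your proof is correct. The construction of $\mu$ is the paper's argument: you telescope $\nu_N-\nu_{N-1}$ in $M(G)$ with $\norm{\nu_N-\nu_{N-1}}\le C\delta_N$, where the paper telescopes the densities $R_N=\prod_{j\le N}f_j/p_j$ in $L^1(\rho)$, and the two are isometric.

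The genuine difference is in the proof of $|\mu|\sim\rho$.
\begin{itemize}
\item The paper argues pointwise. Tonelli gives $\int\sum_j|r_j-1|\dd\rho=\sum_j\delta_j<\infty$ for $r_j=f_j/p_j$, so $\sum_j|r_j(x_j)-1|<\infty$ $\rho$-a.e. For nonvanishing factors the infinite product then converges to a nonzero limit. An a.e.\ convergent subsequence of $R_N\to R$ identifies this limit with the density of $\mu$.
\item You pass to variations, identify $|\mu|$ as the norm limit of $|\nu_N|=\tau_N$, and invoke Kakutani's dichotomy. This works: $\pi_j\sim p_jm_j$ because both densities are everywhere positive, and the Hellinger affinities satisfy $1-a_j\le\tfrac12\norm{\,|f_j|/\norm{f_j}_1-p_j}_1\le\delta_j$, so $\prod_ja_j>0$.
\end{itemize}
You should state explicitly that $\tau=C\bigotimes_j\pi_j$ with $\pi_j=(|f_j|/\norm{f_j}_1)m_j$. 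Kakutani's theorem applies only to genuine infinite products of probability measures, not to an abstract norm limit. The identification follows from your own marginal and uniqueness argument.

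What each route buys: yours gives an explicit description of $|\mu|$ as a scaled product measure and is conceptually clean, but it imports Kakutani's theorem, which is itself a martingale/Hellinger argument. The paper uses only Tonelli and the elementary convergence criterion for infinite products, and it yields the pointwise formula $R(x)=\prod_jr_j(x_j)$ a.e.

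One correction to your closing aside. The hypothesis gives only a summable \emph{first} moment, $\int|r_j-1|\dd\rho=\delta_j$. The second moments $\int|r_j-1|^2\dd\rho=\int|f_j-p_j|^2/p_j\dd m_j$ need not be summable when $p_j$ is small somewhere. So the elementary alternative must be the first-moment Tonelli argument, with principal logarithms taken once $|r_j-1|<1/2$. That is exactly the paper's proof.
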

\begin{proof}
On the probability space $(G,\rho)$ set
$r_j=f_j/p_j$ and $R_N=\prod_{j=1}^N r_j$. Independence gives
\[
 \norm{R_N}_{L^1(\rho)}=\prod_{j\le N}\norm{f_j}_1,
 \qquad
 \norm{R_{N+1}-R_N}_{L^1(\rho)}
   =\left(\prod_{j\le N}\norm{f_j}_1\right)\delta_{N+1}.
\]
Since $1\le\norm{f_j}_1\le1+\delta_j$, the product $C$ is finite.
The series of the latter norms converges, so $R_N$ has a limit
$R$ in $L^1(\rho)$. Define $\mu=R\rho$. Then
$\nu_N=R_N\rho\to\mu$ in variation norm, with \eqref{eq:tvbound}.
The finite marginals stabilize, since all the factors have mass one,
and therefore give the asserted marginal densities. Also
\[
 \norm{\nu_N}=\norm{R_N}_{L^1(\rho)}
    =\prod_{j\le N}\norm{f_j}_1\longrightarrow C.
\]
Since $|\norm{\nu_N}-\norm\mu|\le\norm{\nu_N-\mu}$, variation
convergence yields $\norm\mu=C$; the mass formula follows in the same
way (or directly from the stabilized marginals). Finite-coordinate
continuous functions are uniformly dense in $C(G)$, so the marginals
determine $\mu$ uniquely.

Finally, Tonelli's theorem gives
\[
 \int_G\sum_j|r_j-1|\dd\rho=\sum_j\delta_j<\infty.
\]
Thus $\sum_j|r_j(x_j)-1|<\infty$ for $\rho$-almost every $x$.
If all factors are nonzero, the elementary convergence criterion for
infinite products implies that $\prod_jr_j(x_j)$ converges to a
nonzero limit. Indeed, eventually $|r_j-1|<1/2$, and the series of
the corresponding principal logarithms converges absolutely.
An almost-everywhere convergent subsequence of $R_N\to R$ in $L^1$
identifies that product with $R$. Hence $|R|>0$ almost everywhere
and $|\mu|\sim\rho$.
\end{proof}

\paragraph{Probabilistic viewpoint.}
The following elementary lemma isolates the probabilistic mechanism used
to prove singularity of the product measures. Under either of the product
measures below, functions depending on distinct coordinates are independent.
Thus bounded coordinate observables have variances that add, and their
normalized sums concentrate around their corresponding expectations. We
pass to a dyadic subsequence so that the bounds supplied by Chebyshev's
inequality become summable. The first Borel--Cantelli lemma then promotes
these estimates to almost-everywhere convergence. Notice that independence
is not required by this direction of Borel--Cantelli itself; it enters only
in the preceding variance computation. If the limiting expectations under
two product measures are different, the resulting full-measure sets are
disjoint, which proves mutual singularity.

\begin{lemma}[A separating statistic]\label{lem:statistic}
Let $m=\bigotimes_jm_j$ and $\rho=\bigotimes_jp_jm_j$ on a countable
product $G=\prod_jH_j$ of finite probability spaces, where each
$p_jm_j$ is a probability measure. Suppose $u_j:H_j\to\mathbb R$ are uniformly
bounded, regarded as coordinate functions on $G$, and satisfy
\[
 \E_{m_j}u_j=0,
 \qquad
 \frac1N\sum_{j\le N}\E_{p_jm_j}u_j\longrightarrow c\ne0.
\]
Then $\rho\perp m$.
\end{lemma}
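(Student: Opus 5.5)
Let $S_N=\frac1N\sum_{j\le N}u_j$, viewed as a function on $G$ through the coordinates, and let $K$ be a uniform bound for the $|u_j|$. The plan is to show that $S_N\to0$ $m$-almost everywhere and that $S_N\to c$ $\rho$-almost everywhere, at least along a common subsequence $N_k$. Then the sets
\[
 A=\{x:S_{N_k}(x)\to0\},\qquad B=\{x:S_{N_k}(x)\to c\}
\]
are Borel, disjoint because $c\ne0$, and satisfy $m(A)=1$ and $\rho(B)=1$. This gives $\rho\perp m$.

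Both limits come from one variance argument. Let $\lambda$ be either $m$ or $\rho$. Under $\lambda$ the coordinate functions $u_j$ are independent, since $\lambda$ is a product measure, and each has variance at most $K^2$. Put $e_N=\frac1N\sum_{j\le N}\E_\lambda u_j$. Then
\[
 \E_\lambda|S_N-e_N|^2\le K^2/N .
\]
Chebyshev's inequality gives $\lambda(|S_N-e_N|>\varepsilon)\le K^2/(N\varepsilon^2)$.

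Take the dyadic subsequence $N_k=2^k$. The bounds $K^2/(2^k\varepsilon^2)$ are summable in $k$, so the first Borel--Cantelli lemma shows that $\lambda$-almost surely $|S_{N_k}-e_{N_k}|\le\varepsilon$ for all large $k$. Running $\varepsilon$ over $1/n$ and intersecting the countably many full-measure sets gives $S_{N_k}-e_{N_k}\to0$ $\lambda$-almost everywhere.

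For $\lambda=m$ the hypothesis $\E_{m_j}u_j=0$ gives $e_N=0$. For $\lambda=\rho$ the hypothesis gives $e_N\to c$, hence also $e_{N_k}\to c$ along the subsequence. This yields the two limits claimed above.

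The only point needing care is to use the same subsequence for both measures, so that $A$ and $B$ are defined through the same statistics; the dyadic choice does this. No full-sequence convergence is required.
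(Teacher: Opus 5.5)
Your proposal is correct and follows essentially the same route as the paper's proof. Both arguments bound the variance of the averaged coordinate sum by independence, apply Chebyshev along the dyadic subsequence $N=2^k$, and use Borel--Cantelli over countably many thresholds. Both then separate $m$ and $\rho$ by the disjoint full-measure sets on which the common dyadic averages converge to $0$ and to $c$, respectively.
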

\begin{proof}
Let $M=\sup_j\norm{u_j}_\infty$. For either $\sigma=m$ or
$\sigma=\rho$, the coordinate functions are independent and hence
\[
 \Var_\sigma\left(\frac1N\sum_{j\le N}u_j\right)
 =\frac1{N^2}\sum_{j\le N}\Var_\sigma(u_j)
 \le\frac{M^2}{N}.
\]
For $N=2^\ell$, Chebyshev's inequality gives, for every $r\in\N$,
\[
 \sigma\left(
 \left|\frac1{2^\ell}\sum_{j\le2^\ell}
   \bigl(u_j-\E_\sigma u_j\bigr)\right|>\frac1r\right)
 \le r^2M^2 2^{-\ell}.
\]
The right-hand side is summable in $\ell$. Hence Borel--Cantelli,
applied for all $r\in\N$, shows that the centered dyadic averages
converge to zero $\sigma$-almost everywhere. Under $m$ the means are identically
zero; under $\rho$ their Cesaro averages tend to $c$. Thus the
uncentered averages converge along the dyadic subsequence to $0$
$m$-almost everywhere and to $c$ $\rho$-almost everywhere. The
corresponding Borel sets are disjoint and have full measure for the
respective measures, so $\rho\perp m$.
\end{proof}

\begin{theorem}[General construction scheme]\label{thm:scheme}
Assume \eqref{eq:producthyp}, $\rho\perp m$, and
\[
 S_j:=\supp\widehat f_j,\qquad S_j\cap(-S_j)=\{0\}.
\]
Then the rectangle $E$ from \eqref{eq:rectangle} is strongly small-2
and not Riesz. The measure of Lemma~\ref{lem:product} satisfies
\begin{equation}\label{eq:fullsupport}
 \mu\perp m,\quad\mu(G)=1,\quad\supp\widehat\mu=E.
\end{equation}
If in addition
\begin{equation}\label{eq:fourthhyp}
 \sum_j b_j<\infty,\qquad
 b_j=\sum_{\eta\ne0}|\widehat f_j(\eta)|^4,
\end{equation}
then
\begin{equation}\label{eq:squarenorm}
 \mu*\mu\in L^2(G),\qquad
 \norm{\mu*\mu}_2^2=\prod_j(1+b_j).
\end{equation}
\end{theorem}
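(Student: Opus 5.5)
The plan is to assemble the earlier results. Since $\widehat f_j(0)=\int f_j\dd m_j=1$, we have $0\in S_j$. Together with $S_j\cap(-S_j)=\{0\}$, Corollary~\ref{cor:rectangle} shows that $E$ is strongly small-2 and that $E\cap(-E)=\{0\}$. Once \eqref{eq:fullsupport} is established, $\mu$ is a nonzero measure in $M_E(G)$ with $\mu\perp m$, so $E$ is not Riesz.

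\textbf{Fourier support of $\mu$.} By Lemma~\ref{lem:product}, the marginal of $\mu$ on the first $N$ blocks has density $\prod_{j\le N}f_j$. For $\gamma\in\bigoplus\widehat H_j$ supported in the first $N$ coordinates, $\chi_\gamma$ depends only on those blocks. Hence
\[
\widehat\mu(\gamma)=\prod_{j\le N}\widehat f_j(\gamma_j)=\prod_j\widehat f_j(\gamma_j),
\]
using $\widehat f_j(0)=1$ for $j>N$. This is nonzero exactly when $\gamma_j\in S_j$ for all $j$, so $\supp\widehat\mu=E$. The relation $\mu(G)=1$ is already contained in \eqref{eq:productnorm}.

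\textbf{Singularity.} Lemma~\ref{lem:product} gives $\mu\ll\rho$. Since $\rho\perp m$, there is a Borel set $B$ with $\rho(B)=0$ and $m(G\setminus B)=0$. Then $|\mu|(B)=0$ because $|\mu|\ll\rho$, so $|\mu|\perp m$. We do not need the stronger relation $|\mu|\sim\rho$ here.

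\textbf{The $L^2$ statement.} Write $\widehat{\mu*\mu}(\gamma)=\prod_j\widehat f_j(\gamma_j)^2$. Then
\[
\sum_\gamma|\widehat{\mu*\mu}(\gamma)|^2=\sum_{\gamma}\prod_j|\widehat f_j(\gamma_j)|^4 .
\]
The sum runs over finitely supported $\gamma$. Expanding the partial products $\prod_{j\le N}(1+b_j)$, the sum equals $\lim_N\prod_{j\le N}(1+b_j)=\prod_j(1+b_j)$, which is finite by \eqref{eq:fourthhyp}. The precise step is monotone convergence: group the $\gamma$ by the largest index of their support.

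It remains to show that a measure with square-summable Fourier–Stieltjes coefficients lies in $L^2$. Let $g\in L^2(G)$ be the function with coefficients $\widehat{\mu*\mu}$, which exists by Plancherel. The measures $g\,m$ and $\mu*\mu$ have the same Fourier coefficients, so by uniqueness of Fourier–Stieltjes coefficients (the Stone–Weierstrass density used in Lemma~\ref{lem:finitefourier}) they coincide. Therefore $\mu*\mu=g\in L^2(G)$ with $\norm{\mu*\mu}_2^2=\prod_j(1+b_j)$.

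The main point to handle carefully is the identification of $\widehat\mu$ from the marginals; the product expansion of the $\ell^4$ sum is routine. Everything else is a direct appeal to Corollary~\ref{cor:rectangle} and Lemma~\ref{lem:product}.
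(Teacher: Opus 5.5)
Your proof is correct and takes essentially the same route as the paper. You get the product formula for $\widehat\mu$ from the finite marginals, singularity from $\mu\ll\rho$, strong small-2 from Corollary~\ref{cor:rectangle}, and $\mu*\mu\in L^2(G)$ from the $\ell^4$ product expansion plus Riesz--Fischer and Fourier uniqueness. You also spell out two points the paper leaves implicit: that $0\in S_j$ because $\widehat f_j(0)=1$, and the null-set argument showing $\rho\perp m$ gives $|\mu|\perp m$.
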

\begin{proof}
The marginal identities imply, for each character of finite block support,
\begin{equation}\label{eq:fourierproduct}
 \widehat\mu(\gamma)
   =\prod_{j:\gamma_j\ne0}\widehat f_j(\gamma_j).
\end{equation}
As $\widehat f_j(0)=1$, this proves the exact support assertion.
The measure has mass one and is dominated by the singular measure
$\rho$. Corollary~\ref{cor:rectangle} gives strong small-2, and the
nonzero singular measure excludes the Riesz property.

The dual of $G$ is countable. Summing the nonnegative terms first over
characters supported in the first $N$ blocks, and then letting $N$
increase, gives
\[
 \sum_{\gamma\in\widehat G}|\widehat\mu(\gamma)|^4
   =\lim_{N\to\infty}\prod_{j=1}^N(1+b_j)<\infty.
\]
The Riesz--Fischer theorem gives an $L^2$ function with Fourier
coefficients $\widehat\mu(\gamma)^2$. Its associated measure agrees
with $\mu*\mu$ by Fourier uniqueness, proving \eqref{eq:squarenorm}.
\end{proof}

\section{An explicit Boolean-block construction}

For $k\ge1$, let $q=2^k$, $V=\F_2^k\times\F_2^k$, and
\[
 b(u,v)=(-1)^{u\cdot v}.
\]
We identify $V$ with its dual by the standard dot products. Direct
summation, first in the second variable, gives
\begin{align}
 \widehat b(s,t)
 &=q^{-2}\sum_{u,v\in\F_2^k}
        (-1)^{u\cdot v+s\cdot u+t\cdot v}\notag\\
 &=q^{-1}(-1)^{s\cdot t}=q^{-1}b(s,t).
 \label{eq:bent}
\end{align}
No existence theorem for Boolean bent functions is needed here.

\begin{lemma}[The finite factor]\label{lem:boolean}
On $H=(\Z/3\Z)\times V$, fix $0<a\le1/2$ and define
\begin{align}
 g(t,w)&=a b(w)\cos(2\pi t/3),\notag\\
 h(t,w)&=a q\ind_{\{0\}}(w)\sin(2\pi t/3),\label{eq:booleanfactor}\\
 p&=1+g,\qquad f=p+ih.\notag
\end{align}
Then $p\ge1-a>0$, $\int p\dd m_H=\int f\dd m_H=1$, and
\begin{equation}\label{eq:booleancost}
 \norm{f-p}_1=\frac a{\sqrt3q},\qquad
 1\le\norm f_1\le1+\frac a{\sqrt3q}.
\end{equation}
The support of $\widehat f$ is exactly
\begin{equation}\label{eq:booleansupport}
 S_q=\{(0,0)\}\cup\{(b(z),z):z\in V\},
 \qquad S_q\cap(-S_q)=\{0\}.
\end{equation}
The nonconstant coefficients and moments satisfy
\begin{align}
 \widehat f(b(z),z)&=\frac a q b(z)\quad(z\in V),\label{eq:booleanvalues}\\
 \E_{m_H}g&=0,\qquad\E_{m_H}g^2=a^2/2,\qquad |g|\le a,
 \label{eq:booleanmoments}\\
 \norm f_2^2&=1+a^2,\qquad
 \sum_{\eta\ne0}|\widehat f(\eta)|^4=a^4/q^2.
 \label{eq:booleanparseval}
\end{align}
In particular, $f$ has no zeros.
\end{lemma}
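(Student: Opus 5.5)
The plan is to compute $\widehat f$ explicitly on $\widehat H=(\Z/3\Z)\times V$ by separating the $\Z/3\Z$ and $V$ variables. Every other assertion then follows from that formula together with elementary averages over $\Z/3\Z$.

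First, the pointwise and mass claims. Since $|b|=1$ and $|\cos|\le1$, we have $|g|\le a$, so $p\ge1-a\ge1/2>0$. Because $p$ and $g$ are real, $\operatorname{Re}f=p>0$, and hence $f$ has no zeros. The three values of $\cos(2\pi t/3)$ are $1,-\tfrac12,-\tfrac12$, so $\E\cos=0$ and $\E\cos^2=\tfrac12$. This gives \eqref{eq:booleanmoments} and $\int p\dd m_H=1$. Likewise $\E\sin=0$ gives $\int h\dd m_H=0$, so $\int f\dd m_H=1$.

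For \eqref{eq:booleancost}, note that $f-p=ih$. We have $\E_V\ind_{\{0\}}=q^{-2}$, and $|\sin(2\pi t/3)|$ takes the values $0,\tfrac{\sqrt3}2,\tfrac{\sqrt3}2$, whose average is $1/\sqrt3$. Hence $\norm{h}_1=aq\cdot q^{-2}\cdot 3^{-1/2}=a/(\sqrt3 q)$. The bounds on $\norm f_1$ follow from $1=|\int f|\le\norm f_1\le\norm p_1+\norm h_1$, using $\norm p_1=1$ because $p>0$.

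The key step is the Fourier computation. I write $e(x)=e^{2\pi ix}$ and use
\[
\cos(2\pi t/3)=\tfrac12\bigl(e(t/3)+e(-t/3)\bigr),\qquad
\sin(2\pi t/3)=\tfrac1{2i}\bigl(e(t/3)-e(-t/3)\bigr).
\]
By \eqref{eq:bent}, $\widehat b=q^{-1}b$. Under the normalized convention, $\widehat{\ind_{\{0\}}}\equiv q^{-2}$ on $V$. Since the transform of a product of functions of separate variables factors, this yields
\[
\widehat g(\pm1,z)=\frac a{2q}b(z),\qquad
\widehat{ih}(\pm1,z)=i\cdot aq\cdot q^{-2}\cdot\frac{\pm1}{2i}=\pm\frac a{2q},
\]
and both transforms vanish for $s=0$. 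Adding the contributions,
\[
\widehat f(0,0)=1,\qquad \widehat f(1,z)=\frac a{2q}\bigl(b(z)+1\bigr),\qquad
\widehat f(-1,z)=\frac a{2q}\bigl(b(z)-1\bigr),
\]
and $\widehat f(0,z)=0$ for $z\ne0$.

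So for each $z\in V$ exactly one of $(\pm1,z)$ carries a nonzero coefficient. It sits at first coordinate $b(z)$, read in $\Z/3\Z$, and has value $(a/q)b(z)$. This proves \eqref{eq:booleanvalues} and the support formula \eqref{eq:booleansupport}. The element $(b(0),0)=(1,0)$ is distinct from $(0,0)$, so no coincidence occurs.

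For antipodality, suppose $-(b(z),z)=(-b(z),z)$ lies in $S_q$. It is nonzero, so it must equal $(b(z),z)$, which forces $2b(z)\equiv0\pmod 3$. This is impossible since $b(z)=\pm1$. Hence $S_q\cap(-S_q)=\{0\}$.

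Finally, \eqref{eq:booleanparseval} follows by counting. There are $q^2$ nonconstant coefficients, each of modulus $a/q$. Parseval gives $\norm f_2^2=1+q^2\cdot a^2/q^2=1+a^2$, and the fourth-moment sum is $q^2\cdot a^4/q^4=a^4/q^2$.

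The main obstacle I expect is sign bookkeeping in the $\sin$ term. With the convention $\widehat\mu(\gamma)=\int\overline{\chi_\gamma}\dd\mu$, the coefficient at $s=1$ is the coefficient of $e(t/3)$, so $\widehat{ih}(1,z)=+a/(2q)$ and $\widehat{ih}(-1,z)=-a/(2q)$. I would double-check this against the case $q=1$ before relying on the cancellation pattern. If the sign came out reversed, the support would be $\{(-b(z),z)\}$ instead, which is still antisymmetric, but the stated formula \eqref{eq:booleanvalues} would need the corresponding adjustment.
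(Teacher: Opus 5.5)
Your proof is correct and follows essentially the same route as the paper: you separate the variables, use $\widehat b=q^{-1}b$, $\widehat{\ind_{\{0\}}}\equiv q^{-2}$ and the sine coefficients $\pm 1/(2i)$ to get $\widehat f(\pm1,z)=\frac{a}{2q}(b(z)\pm1)$, and then read off the support, the antipodal property (via $-z=z$ in $V$) and the Parseval identities. Your sign computation is right under the stated Fourier convention, so the closing caveat about a possible reversed sign is unnecessary.
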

\begin{proof}
The average of the cosine over $\Z/3\Z$ is zero and the average of
its square is $1/2$. Since $b^2=1$, this proves
\eqref{eq:booleanmoments}, as well as positivity and mass of $p$.
The sine has zero average. Also,
\[
 \norm h_1=q^{-2}\frac23\frac{aq\sqrt3}{2}
            =\frac a{\sqrt3q}.
\]
The estimate $|p+ih|\le p+|h|$ proves \eqref{eq:booleancost}.

With the stated Fourier convention, the sine coefficients at $1,-1$
are $1/(2i),-1/(2i)$. Using \eqref{eq:bent} and
$\widehat{\ind_{\{0\}}}(z)=q^{-2}$ gives
\begin{align*}
 \widehat f(1,z)&=\frac a{2q}(b(z)+1),\\
 \widehat f(-1,z)&=\frac a{2q}(b(z)-1),\\
 \widehat f(0,z)&=\ind_{\{0\}}(z).
\end{align*}
For each $z$, exactly one of the first two coefficients is nonzero.
Since $-z=z$ in $V$, its negative frequency is absent. This proves
\eqref{eq:booleansupport} and \eqref{eq:booleanvalues}, including
$z=0$, where the selected nonconstant frequency is $(1,0)$.
There are $q^2$ nonconstant coefficients of modulus $a/q$.
Parseval gives \eqref{eq:booleanparseval}. Finally $\operatorname{Re}f=p>0$.
\end{proof}

\begin{proof}[Proof of Theorem~\ref{thm:main} for $(\Z/6\Z)^{\N}$]
Fix $0<a\le1/2$ and use Lemma~\ref{lem:boolean} with
\[
 q_j=2^j,\qquad H_j=(\Z/3\Z)\times\F_2^{2j}.
\]
Let $G=\prod_jH_j$, $m=\bigotimes_jm_j$, and
$\rho=\bigotimes_jp_jm_j$. Under $m$, each $g_j$ has mean zero.
Under $\rho$,
\[
 \E_\rho g_j=\int g_j(1+g_j)\dd m_j=a^2/2.
\]
Lemma~\ref{lem:statistic} shows $\rho\perp m$.
Since
\[
 \sum_j\norm{f_j-p_j}_1
     =\frac a{\sqrt3}\sum_j2^{-j}<\infty,
\]
Theorem~\ref{thm:scheme} applies. It produces $E$ and $\mu$ with all
properties in \eqref{eq:main}; more explicitly,
\begin{equation}\label{eq:explicitbound}
 1\le\norm\mu\le e^{a/\sqrt3},\qquad
 \norm{\mu*\mu}_2^2=\prod_{j\ge1}\left(1+\frac{a^4}{4^j}\right).
\end{equation}
The local spectra are proper (for example $(-1,0)\notin S_{q_j}$),
so $E$ is proper. Each block contributes one $\Z/3\Z$ factor and
$2j$ factors of order two. Reindexing the countably many order-two
factors yields
\[
 G\cong(\Z/3\Z)^{\N}\times(\Z/2\Z)^{\N}
   \cong(\Z/6\Z)^{\N}.
\]
Transporting the set and measure under this topological group
isomorphism finishes the proof.
\end{proof}

\begin{remark}\label{rem:blocks}
The orders of the blocks $H_j$ tend to infinity. After rewriting the
ambient group as a product of groups of order six, $E$ need not be a
rectangle for those smaller coordinates. The chosen sign in
\eqref{eq:booleansupport} depends on the entire Boolean block.
\end{remark}

\section{One fixed spectrum and a sharp amplitude dichotomy}

Retain the blocks $H_j$ and $q_j=2^j$ from Section~4, but allow
\[
 0<a_j\le1/2.
\]
Let $f_j,p_j$ be the corresponding factors. Denote by $B_j$ the
function $b$ from Section~4 on $V_j=\F_2^j\times\F_2^j$, and put
\[
 u_j(t,w)=B_j(w)\cos(2\pi t/3),\qquad p_j=1+a_ju_j.
\]
The local set $S_{q_j}$ is independent of the positive amplitude.
Since $\sum_ja_j2^{-j}<\infty$, Lemma~\ref{lem:product} supplies a
measure $\mu_{\boldsymbol a}$ for every such sequence, and its
Fourier support is always the same set $E$.

\begin{theorem}[Amplitude dichotomy]\label{thm:amplitudes}
For this family,
\begin{align}
 \sum_ja_j^2<\infty
 &\quad\Longrightarrow\quad
 \mu_{\boldsymbol a}\in L^2(G),\qquad
 \norm{\mu_{\boldsymbol a}}_2^2=\prod_j(1+a_j^2),
 \label{eq:l2branch}\\
 \sum_ja_j^2=\infty
 &\quad\Longrightarrow\quad \mu_{\boldsymbol a}\perp m.
 \label{eq:singbranch}
\end{align}
In either case,
\begin{equation}\label{eq:ampsquare}
 \mu_{\boldsymbol a}*\mu_{\boldsymbol a}\in L^2(G),\qquad
 \norm{\mu_{\boldsymbol a}*\mu_{\boldsymbol a}}_2^2
   =\prod_j\left(1+\frac{a_j^4}{4^j}\right).
\end{equation}
\end{theorem}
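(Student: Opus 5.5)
The three assertions rest on the Fourier product formula \eqref{eq:fourierproduct}: $\widehat\mu_{\boldsymbol a}(\gamma)=\prod_{j:\gamma_j\ne0}\widehat f_j(\gamma_j)$ for every character of finite block support. Lemma~\ref{lem:boolean} with $a=a_j$, $q=q_j=2^j$ shows that each block contributes exactly $q_j^2$ nonzero nonconstant coefficients, all of modulus $a_j/q_j$. Hence $\sum_{\eta\ne0}|\widehat f_j(\eta)|^2=a_j^2$ and $\sum_{\eta\ne0}|\widehat f_j(\eta)|^4=a_j^4/q_j^2=a_j^4/4^j$.

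For \eqref{eq:ampsquare} I would argue as in the last part of Theorem~\ref{thm:scheme}. Sum $|\widehat\mu_{\boldsymbol a}|^4$ first over characters supported in the first $N$ blocks, where the sum factorizes as $\prod_{j\le N}(1+a_j^4/4^j)$, and then let $N\to\infty$. The limit is finite because $a_j\le1/2$. Riesz--Fischer then gives an $L^2$ function with coefficients $\widehat\mu_{\boldsymbol a}^2$, and Fourier uniqueness identifies it with $\mu_{\boldsymbol a}*\mu_{\boldsymbol a}$.

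For \eqref{eq:l2branch} I would run the same computation with squares in place of fourth powers. It gives $\sum_\gamma|\widehat\mu_{\boldsymbol a}(\gamma)|^2=\prod_j(1+a_j^2)$, which is finite exactly when $\sum_ja_j^2<\infty$. Riesz--Fischer produces $F\in L^2(G)$ with $\widehat F=\widehat\mu_{\boldsymbol a}$, and uniqueness of Fourier--Stieltjes transforms gives $\mu_{\boldsymbol a}=F\,m$ with $\norm{\mu_{\boldsymbol a}}_2^2=\prod_j(1+a_j^2)$.

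The singular branch \eqref{eq:singbranch} is the main step. Since $\sum_ja_j2^{-j}<\infty$ and every $f_j$ is zero-free, Lemma~\ref{lem:product} gives $|\mu_{\boldsymbol a}|\sim\rho$, so it suffices to prove $\rho\perp m$. Lemma~\ref{lem:statistic} cannot be applied directly, because the means $a_j\E u_j^2=a_j^2/2$ need not have a nonzero Ces\`aro limit. I would instead use the weighted statistic $T_N=V_N^{-1}\sum_{j\le N}a_ju_j$, where $V_N=\sum_{j\le N}a_j^2\to\infty$. Under $m$ each $u_j$ has mean $0$, and under $\rho$ we have $\E_\rho(a_ju_j)=a_j^2\E_m u_j^2=a_j^2/2$, so $\E_\rho T_N=1/2$. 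Under either measure independence and $|u_j|\le1$ give $\Var(T_N)\le V_N^{-1}$.

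To make the Chebyshev bounds summable I would choose indices $N_\ell$ with $V_{N_\ell}\ge2^\ell$. This is possible because $V_N\uparrow\infty$, and the jumps of $V_N$ are at most $1/4$, though that bound is not needed. Chebyshev together with Borel--Cantelli, applied for each $r\in\N$, then gives $T_{N_\ell}\to0$ $m$-a.e.\ and $T_{N_\ell}\to1/2$ $\rho$-a.e. The two sets of convergence are disjoint and each has full measure for its respective measure, so $\rho\perp m$ and hence $\mu_{\boldsymbol a}\perp m$.
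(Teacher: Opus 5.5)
Your proposal is correct and follows the paper's proof essentially step for step. The $\ell^2$ branch uses the Fourier product formula with Riesz--Fischer. The singular branch uses the weighted statistic $V_N^{-1}\sum_{j\le N}a_ju_j$ along indices with $V_{N_\ell}\ge2^\ell$, then Chebyshev and Borel--Cantelli, and finally $|\mu_{\boldsymbol a}|\sim\rho_{\boldsymbol a}$ from Lemma~\ref{lem:product}.

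Your one deviation, deriving \eqref{eq:ampsquare} directly from the fourth-power sum instead of citing Theorem~\ref{thm:scheme}, is if anything slightly cleaner. That theorem formally assumes $\rho\perp m$, and this hypothesis fails in the $\ell^2$ branch.
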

\begin{proof}
For the first implication, sum the squared Fourier coefficients using
\eqref{eq:booleanparseval} and \eqref{eq:fourierproduct}. Their sum is
$\prod_j(1+a_j^2)$, which is finite under the hypothesis. Fourier
uniqueness and Riesz--Fischer identify the measure with its $L^2$ density.

For the other implication write $\rho_{\boldsymbol a}=\bigotimes_jp_jm_j$
and put
\[
 V_N=\sum_{j\le N}a_j^2,\qquad T_N=\sum_{j\le N}a_ju_j.
\]
Under $m$ and $\rho_{\boldsymbol a}$ the coordinates are independent.
Moreover,
\[
 \E_mT_N=0,\qquad \E_{\rho_{\boldsymbol a}}T_N=V_N/2,
\]
because $\E_m u_j^2=1/2$. Under either measure,
$\Var(T_N)\le V_N$, since $|a_ju_j|\le a_j$.
As $V_N\to\infty$, choose an increasing sequence $N_\ell$ such that
$V_{N_\ell}\ge2^\ell$. For every $\varepsilon>0$ and for either
$\sigma=m$ or $\sigma=\rho_{\boldsymbol a}$, Chebyshev's inequality gives
\[
 \sigma\left(
 \left|\frac{T_{N_\ell}-\E_\sigma T_{N_\ell}}{V_{N_\ell}}\right|
 >\varepsilon\right)
 \le\frac{1}{\varepsilon^2V_{N_\ell}}
 \le\frac{2^{-\ell}}{\varepsilon^2}.
\]
Borel--Cantelli, applied to a countable sequence of positive
thresholds, therefore yields
\[
 \frac{T_{N_\ell}}{V_{N_\ell}}\longrightarrow0\quad m\text{-a.e.},
 \qquad
 \frac{T_{N_\ell}}{V_{N_\ell}}\longrightarrow\frac12
       \quad\rho_{\boldsymbol a}\text{-a.e.}
\]
Thus $\rho_{\boldsymbol a}\perp m$. Lemma~\ref{lem:product} gives
$|\mu_{\boldsymbol a}|\sim\rho_{\boldsymbol a}$ and proves
\eqref{eq:singbranch}. Formula~\eqref{eq:ampsquare} follows from
Theorem~\ref{thm:scheme}, because $\sum_ja_j^4/4^j<\infty$.
\end{proof}

\begin{corollary}[Near-probability singular measures]\label{cor:nearprob}
For each $0<a\le1/2$ let $\mu_a$ correspond to the constant amplitude
sequence. Then
\begin{equation}\label{eq:nearprob}
 \begin{gathered}
 \mu_a\perp m,\quad\mu_a(G)=1,\quad\supp\widehat{\mu_a}=E,\\
 \norm{\mu_a}\longrightarrow1,\qquad
 \mu_a*\mu_a\longrightarrow m\text{ in }L^2(G)
               \quad(a\downarrow0).
 \end{gathered}
\end{equation}
Also $\mu_a\to m$ in $\sigma(M(G),C(G))$, whereas
$\norm{\mu_a-m}\to2$.
\end{corollary}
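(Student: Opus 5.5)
The plan is to read off every assertion of Corollary~\ref{cor:nearprob} from results already proved for the constant sequence $a_j=a$. Since $\sum_j a^2=\infty$, the singular branch \eqref{eq:singbranch} of Theorem~\ref{thm:amplitudes} gives $\mu_a\perp m$. Lemma~\ref{lem:product} gives $\mu_a(G)=1$. Formula \eqref{eq:fourierproduct}, together with $\widehat f_j(0)=1$ and the exact local supports $S_{q_j}$ from Lemma~\ref{lem:boolean}, gives $\supp\widehat{\mu_a}=E$. For the norm, \eqref{eq:productnorm} and \eqref{eq:booleancost} yield
\[
 1\le\norm{\mu_a}\le\prod_j\Bigl(1+\frac{a}{\sqrt3\,2^j}\Bigr)\le e^{a/\sqrt3},
\]
which tends to $1$ as $a\downarrow0$.

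For the $L^2$ statement I would argue on the Fourier side. By Theorem~\ref{thm:scheme},
\[
 \widehat{\mu_a*\mu_a}-\widehat m=\widehat{\mu_a}^{\,2}\,\ind_{\Gamma\setminus\{0\}}.
\]
Parseval then gives
\[
 \norm{\mu_a*\mu_a-m}_2^2=\prod_j\Bigl(1+\frac{a^4}{4^j}\Bigr)-1\le e^{a^4/3}-1\to0 .
\]
For weak-star convergence, I would first observe that for each fixed $\gamma\ne0$,
\[
 |\widehat{\mu_a}(\gamma)|=\prod_{j:\gamma_j\ne0}\frac{a}{q_j}\le a\to0=\widehat m(\gamma),
\]
so $\widehat{\mu_a}(\gamma)\to\widehat m(\gamma)$ for every $\gamma$. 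Trigonometric polynomials are uniformly dense in $C(G)$, and the norms $\norm{\mu_a}$ stay bounded by $e^{1/(2\sqrt3)}$. A standard $\varepsilon/3$ argument then upgrades coefficientwise convergence to convergence in $\sigma(M(G),C(G))$.

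The last claim, $\norm{\mu_a-m}\to2$, is the only nontrivial step. Since $\mu_a\perp m$, we have $\norm{\mu_a-m}=\norm{\mu_a}+\norm m=\norm{\mu_a}+1$, which tends to $1+1=2$ by the norm bound above. The point to verify carefully is that mutual singularity of $|\mu_a|$ and $m$ gives exact additivity of total variation: choose a Borel set $A$ that carries $|\mu_a|$ and is $m$-null, and split $\mu_a-m$ over $A$ and its complement. No further obstacle is expected.
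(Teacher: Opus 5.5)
Your proposal is correct and follows the paper's proof essentially step by step. It gets singularity from the singular branch of Theorem~\ref{thm:amplitudes}, the norm limit from the bound \eqref{eq:explicitbound}, the $L^2$ limit from Parseval applied to $\mu_a*\mu_a-m$, the weak-star limit from coefficientwise convergence plus uniform boundedness of $\norm{\mu_a}$, and $\norm{\mu_a-m}=\norm{\mu_a}+1$ from $\mu_a\perp m$. One small slip: the identity $|\widehat{\mu_a}(\gamma)|=\prod_{j:\gamma_j\ne0}a/q_j$ holds only for $\gamma\in E$, since the coefficient vanishes off $E$. The bound $|\widehat{\mu_a}(\gamma)|\le a$, which is all you actually use, is still valid for every $\gamma\ne0$.
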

\begin{proof}
Singularity follows from \eqref{eq:singbranch}. The estimate
\eqref{eq:explicitbound} proves convergence of the variation norms.
Since the constant Fourier coefficient of each square is one,
\[
 \norm{\mu_a*\mu_a-m}_2^2
  =\prod_j(1+a^4/4^j)-1\longrightarrow0.
\]
For each fixed nonzero character the product formula has at least one
factor proportional to $a$, so its coefficient tends to zero. Uniform
boundedness of $\norm{\mu_a}$ and density of trigonometric polynomials
then give weak-star convergence to $m$. Finally $\mu_a\perp m$ implies
$\norm{\mu_a-m}=\norm{\mu_a}+1$.
\end{proof}

\begin{proposition}[Mutually singular families and the quotient]\label{prop:l1}
If $a\ne b$ in $(0,1/2]$, then $|\mu_a|\perp|\mu_b|$.
Both $M_E(G)$ and $M_E(G)/L^1_E(G)$ contain an isometric copy of
$\ell^1((0,1/2])$. Multiplication in this quotient is identically zero.
\end{proposition}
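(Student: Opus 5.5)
To prove $|\mu_a|\perp|\mu_b|$ for $a\ne b$, I will first reduce to the positive products. By Lemma~\ref{lem:product}, since every $f_j$ has no zeros, $|\mu_a|\sim\rho_a=\bigotimes_j(1+au_j)m_j$, and similarly for $b$. It therefore suffices to show $\rho_a\perp\rho_b$. I will do this with the separating-statistic argument of Lemma~\ref{lem:statistic}, using the same bounded coordinate observables $u_j$. The proof of that lemma did not really need $m$; it only needed independence of coordinates under each measure together with distinct Ces\`aro limits of the means. Under $\rho_a$ we have
\[
\E_{\rho_a}u_j=\int u_j(1+au_j)\dd m_j=a/2 ,
\]
and under $\rho_b$ the corresponding mean is $b/2$. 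Under either measure the variances are bounded by $1$. Repeating the Chebyshev/dyadic Borel--Cantelli argument, the dyadic averages $2^{-\ell}\sum_{j\le2^\ell}u_j$ converge to $a/2$ $\rho_a$-a.e.\ and to $b/2$ $\rho_b$-a.e. The two limit sets are disjoint Borel sets, each of full measure for its respective measure, which gives $\rho_a\perp\rho_b$.

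For the $\ell^1$ embedding, I will define $T\colon\ell^1((0,1/2])\to M_E(G)$ by $T(c)=\sum_a c_a\mu_a$. Since $c$ has countable support and $\norm{\mu_a}\le e^{a/\sqrt3}$ is bounded, the series converges in norm. The image lies in $M_E(G)$ because this space is norm closed and each $\mu_a\in M_E(G)$. This map is not isometric, because $\norm{\mu_a}>1$. I will therefore normalize instead, setting $\nu_a=\mu_a/\norm{\mu_a}$ and $T(c)=\sum c_a\nu_a$. Pairwise mutual singularity of the variations makes total variation additive over countable sums, so $\norm{T(c)}=\sum|c_a|$.

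For the quotient, I need to compute $\operatorname{dist}(T(c),L^1_E(G))$. Each $|\nu_a|$ is singular to $m$, so every countable combination is concentrated on a $m$-null set $N$, and $T(c)\perp m$. For $g\in L^1$ this gives $\norm{T(c)-g}=\norm{T(c)}+\norm g\ge\norm{T(c)}$. Hence the quotient norm of $T(c)$ equals $\sum|c_a|$, and the composite map into $M_E(G)/L^1_E(G)$ is also isometric.

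Finally, multiplication in the quotient vanishes: for $\alpha,\beta\in M_E(G)$ the product $\alpha*\beta$ lies in $L^1_E(G)$ by the small-2 inclusion \eqref{eq:small2}, which follows from strong small-2 (Corollary~\ref{cor:rectangle}). So every product of cosets is the zero coset.

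The only delicate step is checking that Lemma~\ref{lem:statistic} transfers to a pair of non-Haar products. This is routine, since the variance bound $\Var\le\norm{u_j}_\infty^2=1$ holds under any product measure.
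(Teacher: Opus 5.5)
Your proposal is correct and follows the paper's proof essentially step for step. You use the same separating statistic $2^{-\ell}\sum_{j\le 2^\ell}u_j$ with limits $a/2$ and $b/2$ under $\rho_a$ and $\rho_b$, the normalization $\mu_a/\norm{\mu_a}$, the identity $\norm{\sigma-g}=\norm{\sigma}+\norm{g}_1$ for singular $\sigma$ to pass to the quotient, and \eqref{eq:small2} for the vanishing products. The only cosmetic difference is that you define the embedding directly on countably supported families as a norm-convergent series, whereas the paper proves the isometry on finitely supported families and extends by completion.
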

\begin{proof}
Under $\rho_a=\bigotimes_j(1+au_j)m_j$, the mean of $u_j$ is $a/2$,
and the coordinate functions $u_j$ are independent and bounded by one.
Thus
\[
 \Var_{\rho_a}\left(2^{-\ell}\sum_{j\le2^\ell}u_j\right)
 \le 2^{-\ell}.
\]
Chebyshev's inequality and Borel--Cantelli, applied to a countable
sequence of positive thresholds, give
\[
 2^{-\ell}\sum_{j\le2^\ell}u_j\longrightarrow a/2
       \quad\rho_a\text{-a.e.}
\]
The corresponding full-measure sets for different $a$ are disjoint.
Since $|\mu_a|\sim\rho_a$, their variations are mutually singular.

Set $\lambda_a=\mu_a/\norm{\mu_a}$. For a finitely supported scalar
family $(c_a)$, mutual singularity gives
\[
 \left\lVert\sum_a c_a\lambda_a\right\rVert=\sum_a|c_a|.
\]
Completion defines an isometry from $\ell^1((0,1/2])$ into $M_E(G)$.
Every element of its range is singular: an $\ell^1$ family has
countable support, and a countable sum of singular measures is singular.
If $\sigma\perp m$ and $f\in L^1_E(G)$, then
$\norm{\sigma-fm}=\norm\sigma+\norm f_1$.
Hence the quotient norm of $\sigma+L^1_E(G)$ is $\norm\sigma$,
proving the second isometry. Finally \eqref{eq:small2} says precisely
that every product in the quotient vanishes.
\end{proof}

\section{Elementary abelian groups of odd exponent}

We now prove the second part of Theorem~\ref{thm:main}. Throughout this
section, $p$ denotes an odd prime, not a density.

\begin{lemma}[An elementary quadratic Fourier calculation]\label{lem:gauss}
Let $k\ge1$, let $K=\F_{p^{2k}}$, $Q=|K|$, and let
$\eta:K\to\{0,1,-1\}$ be the
quadratic character, with $\eta(0)=0$. Choose a nontrivial additive
character $\psi$ of $K$, and identify its additive dual by
$z\mapsto(w\mapsto\psi(zw))$. There is a real function $b$ on $K$
such that
\begin{equation}\label{eq:gaussprops}
 |b|\le1,\quad b^2=\ind_{K\setminus\{0\}},\quad
 \widehat b(0)=0,\quad
 \widehat b(z)=\frac{\eta(z)}{\sqrt Q}\quad(z\ne0).
\end{equation}
Moreover $\eta(-z)=\eta(z)$.
\end{lemma}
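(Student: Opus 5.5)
The plan is to take $b=\varepsilon\eta$ for a sign $\varepsilon\in\{1,-1\}$ fixed by a quadratic Gauss sum. The properties $|b|\le1$ and $b^2=\ind_{K\setminus\{0\}}$ then hold trivially, because $\eta$ takes values in $\{0,\pm1\}$ and vanishes only at $0$. The property $\widehat b(0)=0$ follows since $\widehat b(0)$ is a multiple of $\sum_{w}\eta(w)$, and this sum vanishes: $K^\times$ has as many squares as nonsquares.

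The first step is the symmetry $\eta(-1)=1$. Every element of $\F_p$ is a square in $\F_{p^2}\subseteq K$: the equation $x^2=c$ has a root in the quadratic extension. Alternatively, $-1=g^{(Q-1)/2}$ for a generator $g$ of $K^\times$, and $(Q-1)/2$ is even since $Q=p^{2k}\equiv1\pmod 4$. Multiplicativity of $\eta$ then gives $\eta(-z)=\eta(z)$.

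Next, for $z\ne0$, compute with the paper's normalized convention and the substitution $u=-zw$:
\[
 \widehat\eta(z)=Q^{-1}\sum_{w\in K}\eta(w)\psi(-zw)
 =Q^{-1}\eta(-z)^{-1}\sum_{u}\eta(u)\psi(u)=Q^{-1}\eta(z)\,\mathcal G,
\]
where $\mathcal G=\sum_u\eta(u)\psi(u)$ and we used $\eta(-z)^{-1}=\eta(-z)=\eta(z)$.

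It remains to show that $\mathcal G=\varepsilon\sqrt Q$ with $\varepsilon=\pm1$, after which $b=\varepsilon\eta$ gives $\widehat b(z)=\eta(z)/\sqrt Q$.

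\emph{Reality.} Since $\eta$ is real, $\overline{\mathcal G}=\sum_u\eta(u)\psi(-u)=\eta(-1)\mathcal G=\mathcal G$.

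\emph{Modulus.} I will prove $|\mathcal G|^2=Q$ directly. Expand $\mathcal G\overline{\mathcal G}=\sum_{u,v\ne0}\eta(uv^{-1})\psi(u-v)$ and substitute $u=tv$. This gives
\[
 \sum_{t\ne0}\eta(t)\sum_{v\ne0}\psi\bigl((t-1)v\bigr).
\]
The inner sum equals $Q-1$ if $t=1$ and $-1$ otherwise, by nontriviality of $\psi$ and the induced identification of the dual. Hence the total is $Q-1-\sum_{t\ne0,1}\eta(t)=Q-1+\eta(1)=Q$.

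So $\mathcal G$ is real with $\mathcal G^2=Q$, which gives $\mathcal G=\pm\sqrt Q$. I expect this Gauss-sum evaluation to be the only real obstacle. Its sign need not be determined, since it is absorbed into $\varepsilon$; this avoids Davenport--Hasse entirely.
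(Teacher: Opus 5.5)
Your proposal is correct and follows essentially the same route as the paper. Both arguments obtain $\eta(-1)=1$ from $Q\equiv1\pmod4$, show the Gauss sum is real with squared modulus $Q$ via the substitution $u=tv$, and compute $\widehat\eta(z)$ by a change of variables. Both then absorb the undetermined sign into $b$. Your $\mathcal G=\sum_u\eta(u)\psi(u)$ coincides with the paper's $\tau=\sum_w\eta(w)\overline{\psi(w)}$ because $\eta(-1)=1$, so your $b=\varepsilon\eta$ is exactly the paper's $b=(\tau/\sqrt Q)\,\eta$.
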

\begin{proof}
The multiplicative group of a finite field is cyclic, and
$Q\equiv1\pmod4$. Hence $-1$ is a square and $\eta(-1)=1$.
Put
\[
 \tau=\sum_{w\in K}\eta(w)\overline{\psi(w)}.
\]
Changing $w$ to $-w$ shows that $\tau$ is real. For its squared
modulus, terms with a zero variable vanish, and the substitution
$x=ty$ gives
\begin{align*}
 |\tau|^2
 &=\sum_{t\in K^\times}\eta(t)
       \sum_{y\in K^\times}\overline{\psi((t-1)y)}\\
 &=(Q-1)-\sum_{\substack{t\in K^\times\\t\ne1}}\eta(t)=Q.
\end{align*}
Here the inner sum is $Q-1$ at $t=1$ and $-1$ otherwise, and
$\sum_{t\in K^\times}\eta(t)=0$.
For $z\ne0$, changing variables from $w$ to $zw$ yields
\[
 \widehat\eta(z)=\frac{\tau}{Q}\eta(z).
\]
Thus $b=(\tau/\sqrt Q)\eta$ has all the stated properties, since
$\tau^2=Q$.
\end{proof}

\begin{lemma}[An odd-characteristic factor]\label{lem:odd}
On $H=(\Z/p\Z)\times K$, with normalized Haar measure $m_H$, fix
$0<a\le1/2$, use the function $b$ from
Lemma~\ref{lem:gauss}, and define
\begin{align}
 g(t,w)&=a b(w)\cos(2\pi t/p),\notag\\
 h(t,w)&=a\sqrt Q\left(\ind_{\{0\}}(w)-\frac1Q\right)
                         \sin(2\pi t/p),\label{eq:oddfactor}\\
 P&=1+g,\qquad f=P+ih.\notag
\end{align}
Then $P\ge1-a>0$, both $P$ and $f$ have integral one, and
\begin{equation}\label{eq:oddcost}
 \norm{f-P}_1\le\frac{2a}{\sqrt Q}.
\end{equation}
Furthermore,
\begin{equation}\label{eq:oddsupport}
 S:=\supp\widehat f
   =\{(0,0)\}\cup\{(\eta(z),z):z\in K^\times\},
 \qquad S\cap(-S)=\{0\},
\end{equation}
with
\begin{equation}\label{eq:oddvalues}
 \widehat f(\eta(z),z)=\frac a{\sqrt Q}\eta(z)\quad(z\ne0).
\end{equation}
Also
\begin{equation}\label{eq:oddmoments}
 \E_{m_H} g=0,\qquad
 \E_{m_H} g^2=\frac{a^2}{2}\left(1-\frac1Q\right),
 \qquad
 \sum_{\zeta\ne0}|\widehat f(\zeta)|^4
     =\frac{(Q-1)a^4}{Q^2}.
\end{equation}
\end{lemma}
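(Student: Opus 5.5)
The plan is a direct computation parallel to the proof of Lemma~\ref{lem:boolean}, with Lemma~\ref{lem:gauss} taking the place of \eqref{eq:bent}. I identify $\widehat H$ with $(\Z/p\Z)\times K$ via $(s,z)\mapsto\bigl((t,w)\mapsto e^{2\pi i st/p}\psi(zw)\bigr)$, and I use the convention $\widehat F(\gamma)=\E_{m_H}F\,\overline{\chi_\gamma}$ throughout.

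\emph{Elementary properties.} Since $|b|\le1$, we have $|g|\le a$, hence $P\ge1-a>0$. Both $\cos(2\pi t/p)$ and $\sin(2\pi t/p)$ have zero average over $\Z/p\Z$, so $g$ and $h$ have integral zero. Therefore $P$ and $f$ have integral one, and $\E_{m_H}g=0$. For the second moment I use $b^2=\ind_{K\setminus\{0\}}$ and the fact that $\cos^2$ averages to $1/2$ (valid because $p\ge3$, so $2\not\equiv0$). This gives $\E g^2=\tfrac{a^2}{2}(1-1/Q)$. For \eqref{eq:oddcost} I bound the $t$-average of $|\sin|$ by $1$ and compute the $w$-average of $|\ind_{\{0\}}-1/Q|$ exactly as $2(Q-1)/Q^2$. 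This yields $\norm{h}_1\le 2a(Q-1)/Q^{3/2}\le 2a/\sqrt Q$, and $\norm{f-P}_1=\norm h_1$.

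\emph{Fourier coefficients.} In the $t$-variable, only the frequencies $s=0,\pm1$ occur. The cosine has coefficients $1/2$ at $\pm1$, and the sine has coefficients $\pm 1/(2i)$ at $\pm1$. In the $w$-variable, Lemma~\ref{lem:gauss} gives $\widehat b(z)=\eta(z)/\sqrt Q$ for $z\ne0$ and $\widehat b(0)=0$. Likewise $\widehat{\ind_{\{0\}}-1/Q}(z)=1/Q$ for $z\ne0$ and $0$ for $z=0$. Combining these, for $z\ne0$,
\[
 \widehat f(1,z)=\frac{a}{2\sqrt Q}\bigl(\eta(z)+1\bigr),\qquad
 \widehat f(-1,z)=\frac{a}{2\sqrt Q}\bigl(\eta(z)-1\bigr).
\]
Both coefficients vanish at $z=0$, and $\widehat f(0,z)=\ind_{\{0\}}(z)$. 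Every other frequency $s\notin\{0,\pm1\}$ has coefficient zero. Since $\eta(z)=\pm1$ for $z\ne0$, exactly one of the two coefficients is nonzero, namely the one at $(\eta(z),z)$, and it equals $a\eta(z)/\sqrt Q$. This proves \eqref{eq:oddsupport} (the support part) and \eqref{eq:oddvalues}. The fourth-power sum in \eqref{eq:oddmoments} then follows by counting: there are $Q-1$ nonzero coefficients, each of modulus $a/\sqrt Q$.

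\emph{Antisymmetry.} Suppose a nonzero element $(\eta(z),z)$ and its negative $(-\eta(z),-z)$ both lie in $S$. Since $-z\ne0$, membership of $(-\eta(z),-z)$ would force $-\eta(z)=\eta(-z)$. But $\eta(-z)=\eta(z)$ by Lemma~\ref{lem:gauss}, so this would give $\eta(z)=0$, which is impossible for $z\ne0$. Hence $S\cap(-S)=\{0\}$.

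The main obstacle is the sign bookkeeping with the stated Fourier convention. One has to check that the factor $i$ in $f=P+ih$ turns the sine coefficients $\pm1/(2i)$ into the real numbers $\pm\tfrac12$. Only then do the $\cos$ and $\sin$ contributions combine into $\eta(z)\pm1$, rather than into a combination whose surviving frequency is not $(\eta(z),z)$. The subtraction of $1/Q$ in $h$ serves exactly to kill the $z=0$ column, which $\widehat b(0)=0$ already leaves empty for the cosine part.
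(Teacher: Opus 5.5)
Your proposal is correct and follows the paper's proof essentially step for step: the same moment computations, the same exact evaluation $\E_K|\ind_{\{0\}}-Q^{-1}|=2(Q-1)/Q^2$ for the cost, and the same coefficient formulas $\widehat f(\pm1,z)=\frac{a}{2\sqrt Q}(\eta(z)\pm1)$, with the $z=0$ column cancelled by the $-1/Q$ correction. One point to make explicit: in the antisymmetry step the equation $-\eta(z)=\eta(z)$ lives in $\Z/p\Z$, so concluding $\eta(z)=0$ uses that $p$ is odd; the paper phrases this as $1\ne-1$ in $\Z/p\Z$.
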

\begin{proof}
For odd $p$, the mean of the cosine on $\Z/p\Z$ is zero, and its
mean square is $1/2$. Together with \eqref{eq:gaussprops}, this proves
the assertions about $P$ and the first two identities in
\eqref{eq:oddmoments}. The sine has mean zero, so $\int f=1$.
Also
\[
 \E_K\left|\ind_{\{0\}}-Q^{-1}\right|
    =\frac2Q(1-Q^{-1}),
\]
which proves \eqref{eq:oddcost}.

The Fourier transform of $\ind_{\{0\}}-Q^{-1}$ is zero at the
zero character and $Q^{-1}$ at every other character. Hence for
$z\ne0$,
\begin{align*}
 \widehat f(1,z)&=\frac a{2\sqrt Q}(\eta(z)+1),\\
 \widehat f(-1,z)&=\frac a{2\sqrt Q}(\eta(z)-1).
\end{align*}
Both coefficients at $(1,0)$ and $(-1,0)$ vanish. Outside the constant
frequency and the two families $(1,z)$ and $(-1,z)$ with $z\ne0$,
all coefficients vanish.
The subtraction of $Q^{-1}$ in \eqref{eq:oddfactor} is essential
for precisely this cancellation at $z=0$.
For $z\ne0$ exactly one of the displayed coefficients survives.
Since $\eta(-z)=\eta(z)$ and $1\ne-1$ in $\Z/p\Z$, its negative
frequency does not survive. This proves \eqref{eq:oddsupport} and
\eqref{eq:oddvalues}. There are $Q-1$ nonconstant coefficients of
modulus $a/\sqrt Q$, proving the last identity in
\eqref{eq:oddmoments}.
\end{proof}

\begin{proof}[Proof of Theorem~\ref{thm:main} for $(\Z/p\Z)^{\N}$]
Use Lemma~\ref{lem:odd} on
\[
 K_j=\F_{p^{2j}},\quad Q_j=p^{2j},\quad
 H_j=(\Z/p\Z)\times K_j,
\]
with a fixed $0<a\le1/2$. Put $\rho=\bigotimes_jP_jm_j$.
Under Haar measure $g_j$ has mean zero. Under $\rho$ it has mean
\[
 \E_\rho g_j=\E_{m_j}g_j^2
   =\frac{a^2}{2}(1-Q_j^{-1})\longrightarrow a^2/2.
\]
The $g_j$ are uniformly bounded, so Lemma~\ref{lem:statistic} gives
$\rho\perp m$.
The perturbation costs are summable since $\sum_jQ_j^{-1/2}<\infty$.
Theorem~\ref{thm:scheme} and \eqref{eq:oddsupport} now give the strongly
small-2 set and the singular measure of full Fourier support. Moreover,
\begin{equation}\label{eq:oddnormsquare}
 \norm{\mu*\mu}_2^2
   =\prod_{j\ge1}\left(1+\frac{(Q_j-1)a^4}{Q_j^2}\right)<\infty.
\end{equation}
Each block is additively isomorphic to $(\Z/p\Z)^{2j+1}$; hence the
countable product is isomorphic to $(\Z/p\Z)^{\N}$.
The local sets omit every nonzero frequency of the form $(0,z)$,
so the resulting set $E$ is proper.
\end{proof}

\section{Further consequences and scope}

\begin{corollary}[No convolution roots]\label{cor:noroots}
Every nonzero singular measure $\mu$ obtained above has no convolution
root of any integer order $s\ge2$ in the full algebra $M(G)$.
\end{corollary}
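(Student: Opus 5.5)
Suppose $\nu\in M(G)$ satisfies $\nu^{*s}=\mu$ with $s\ge2$, where $\mu$ is one of the singular measures constructed above (the measure of Theorem~\ref{thm:main}, a member $\mu_{\boldsymbol a}$ of the amplitude family in its singular branch, or any nonzero singular measure in $M_E(G)$ produced here). The plan has two steps. First I show that $\nu$ lies in $M_{E}(G)$. Then I apply the strong small-2 property to reach a contradiction.

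For the first step I work coordinatewise. By \eqref{eq:fourierproduct}, $\widehat\mu$ vanishes off $E$. It is nonzero at every $\gamma\in E$ whenever $\supp\widehat\mu=E$, since each factor $\widehat f_j(\gamma_j)$ is nonzero on $S_j$. Since $\widehat\nu^{\,s}=\widehat\mu$, we get $\widehat\nu(\gamma)=0$ exactly when $\widehat\mu(\gamma)=0$. Hence $\supp\widehat\nu=\supp\widehat\mu\subseteq E$, and so $\nu\in M_E(G)$. This step is purely pointwise and uses no structure beyond multiplicativity of the Fourier--Stieltjes transform. The same reasoning applies to any spectral set: a root has the same Fourier support as the measure.

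For the second step I use $s\ge2$ and write $\mu=\nu*\nu^{*(s-1)}$. Both factors lie in $M_E(G)$, because powers of elements of $M_E(G)$ keep Fourier support inside $\supp\widehat\nu\subseteq E$. Corollary~\ref{cor:rectangle} says $E$ is strongly small-2, and hence small-2 by \eqref{eq:small2}. Therefore $\nu*\nu^{*(s-1)}\in L^1(G)$. But $\mu\perp m$ and $\mu\ne0$, so $\mu$ cannot be absolutely continuous. This contradiction shows that no root of order $s$ exists.

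The only delicate point is which measures the phrase ``every nonzero singular measure obtained above'' covers. For the elements of the $\ell^1$-copy in Proposition~\ref{prop:l1}, the Fourier support may be smaller than $E$, but it is still contained in $E$. The argument only needs $\supp\widehat\nu\subseteq E$, which follows from $\widehat\nu^{\,s}=\widehat\mu$ vanishing off $E$, so these measures are handled as well. The argument never needs $\widehat\mu\neq0$ on all of $E$. In fact it shows more generally that no nonzero singular measure in $M_E(G)$ is an $s$-th convolution power with $s\ge2$, and I would state the proof in that generality.
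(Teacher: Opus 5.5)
Your proposal is correct and follows essentially the paper's argument: the root has the same Fourier support as $\mu$, so it lies in $M_E(G)$, and the small-2 property then forces $\nu^{*s}\in L^1(G)$, contradicting singularity. The only cosmetic difference is in the last step: you split $\nu^{*s}=\nu*\nu^{*(s-1)}$ and use the polarized inclusion \eqref{eq:small2}, whereas the paper uses $\nu*\nu\in L^1(G)$ together with the ideal property of $L^1(G)$.
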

\begin{proof}
If $\lambda^{*s}=\mu$, then
$\widehat\lambda(\gamma)^s=\widehat\mu(\gamma)$ forces
$\supp\widehat\lambda=\supp\widehat\mu=E$.
Thus $\lambda\in M_E(G)$, so $\lambda*\lambda\in L^1(G)$.
The ideal property of $L^1(G)$ implies $\lambda^{*s}\in L^1(G)$,
contradicting singularity and nonvanishing of $\mu$.
\end{proof}

\begin{remark}[Why the measures must be complex]\label{rem:complex}
For the sets constructed here, a real measure $\sigma\in M_E(G)$
must be a real multiple of Haar measure. Indeed,
$\widehat\sigma(-\gamma)=\overline{\widehat\sigma(\gamma)}$ and
$E\cap(-E)=\{0\}$ force all its nonconstant coefficients to vanish.
In particular there is no positive singular probability measure in
$M_E(G)$. The singular witnesses have mass one but variation norm
strictly greater than one. Equality of the mass and the variation
norm at the positive value one would force positivity.

The conjugation convention also yields a useful consistency check:
for each mass-one witness,
\[
 \mu*\overline\mu=m,
\]
since its transform vanishes outside $E\cap(-E)$ and equals one
at zero. This identity concerns conjugation \emph{without reflection};
it is not an assertion that $\mu*\mu^\star=m$.
\end{remark}

\begin{remark}[Limits of the argument]
On an elementary abelian group of exponent two, every character is its
own negative. Therefore \eqref{eq:localasym} forces each $S_j$ to be
$\{0\}$. The present asymmetric-rectangle mechanism does not provide
an example on the Walsh group. Nor do these results decide whether
every small-3 set is small-2. They separate the Riesz and small-2
properties, which is a different implication.
\end{remark}

\end{document}